\newtheorem{condition**}{A*}
\newtheorem{condition***}{C*}
\newtheorem{condition*}{C}
\newtheorem{example}{Example}[section]
\newtheorem{definition}{Definition}[section]
\newtheorem{theorem}{Theorem}[section]
\newtheorem{lemma}{Lemma}[section]
\newtheorem{remark}{Remark}[section]
\begin{document}

\title{ Dynamic Programming for Indefinite Stochastic McKean-Vlasov LQ
Control Problem under Input Constraints }
\author{Xun Li$^{1}$\thanks{
X. Li acknowledges the financial support by the Hong Kong General Research
Fund under grants 15216720, 15221621 and 15226922. E-mail:
li.xun@polyu.edu.hk}, \ \ Liangquan Zhang$^{2}\thanks{%
L. Zhang acknowledges the financial support partly by the National Nature
Science Foundation of China (Grant No. 12171053, 11701040, 11871010
\&61871058) and the Fundamental Research Funds for the Central Universities,
and the Research Funds of Renmin University of China (No. 23XNKJ05).}$ \\
{\small 1. Department of Applied Mathematics }\\
{\small \ The Hong Kong Polytechnic University, Hong Kong, China }\\
{\small 2. School of Mathematics }\\
{\small \ Renmin University of China, Beijing 100872, China }}
\maketitle

\begin{abstract}
In this note, we study a class of indefinite stochastic McKean-Vlasov
linear-quadratic (LQ in short) control problem under the control taking
nonnegative values. In contrast to the conventional issue, both the
classical dynamic programming principle (DPP in short) and the usual Riccati
equation approach fail. We tackle these difficulties by extending the state
space from $\mathbb{R}$ to probability measure space, afterward derive the
the corresponding the infinite dimensional Hamilton--Jacobi--Bellman (HJB in
short) equation. The optimal control and value function can be obtained
basing on two functions constructed via two groups of novelty ordinary
differential equations satisfying the HJB equation mentioned before. As an
application, we revisit the mean-variance portfolio selection problems in
continuous time under the constraint that short-selling of stocks is
prohibited. The investment risk and the capital market line can be captured
simultaneously.
\end{abstract}


\noindent \textbf{AMS subject classifications:} 93E20, 60H15, 60H30.

\noindent \textbf{Key words: }Mean-variance portfolio selection,
short-selling prohibition, stochastic McKean-Vlasov LQ control, infinite
dimensional HJB equation.

\section{introduction}

Since the pioneer works on McKean--Vlasov equations were introduced by
McKean Jr. \cite{mck} and Kac \cite{kac1, kac2}, there are huge literature
focusing on uncontrolled SDEs and obtaining the general propagation of chaos
results. Large attention in the past on the connection with the so-called
mean-field game (MFG for short) theory, considered independently and
simultaneously by Lasry \& Lions in \cite{ll07} and on Huang, Caines \&
Malham\'{e} \cite{hcm}. The McKean--Vlasov equation naturally happens
whenever one tries to comprehend the mechanism of the behavior of many
symmetric agents, all of which interact via the empirical distribution of
their states, to find a Nash equilibrium (competitive equilibrium) or a
Pareto equilibrium (cooperative equilibrium) (see \cite{bfy13, cdl13}).

The classical DPP for the optimal control problem in McKean--Vlasov type
(also called mean field in some literature) fails due to the appearance of
the law of the process in the coefficients and nonlinear dependency\footnote{%
Whenever the objective function in the type like $\mathbb{E}\left[ U\left(
x\left( T\right) \right) \right] $, the dynamic programming is \emph{%
applicable} due to the so-called \textquotedblleft smoothing
property\textquotedblright\
\begin{equation*}
\mathbb{E}\left[ \mathbb{E}\left[ U\left( x\left( T\right) \right)
\left\vert \mathcal{F}_{m}\right. \right] \left\vert \mathcal{F}_{n}\right. %
\right] =\mathbb{E}\left[ U\left( x\left( T\right) \right) \left\vert
\mathcal{F}_{n}\right. \right] ,
\end{equation*}%
where $\left\{ \mathcal{F}_{k}\right\} _{k=1,2\ldots }$ is the underlying
filtration and $n\leq m$.
\par
However, for $U\left( \mathbb{E}\left[ x\left( T\right) \right] \right) ,$
no analogous relation holds (e.g. mean variance $\left[ \mathbb{E}x\left(
T\right) \right] ^{2}$), such as
\par
\begin{equation*}
\mathbb{E}\left[ U\left( \mathbb{E}\left[ x\left( T\right) \left\vert
\mathcal{F}_{m}\right. \right] \right) \left\vert \mathcal{F}_{n}\right. %
\right] \neq U\left( \mathbb{E}\left[ x\left( T\right) \left\vert \mathcal{F}%
_{n}\right. \right] \right) .
\end{equation*}%
} with respect to it. For instance, the classical mean--variance portfolio
selection problem in finance, see \cite{YZ1999, zl2000}. Therefore, problems
like this actually belongs to a time inconsistent framework (see Bj\"{o}rk,
Khapko \& Murgoci, \cite{bkm17}\ and T. Bj\"{o}rk, M. Khapko \& A. Murgoci,
\cite{bm14}, Hern\'{a}ndez \& Possama\"{\i} \cite{HP20} references therein).
Note that though the problem itself is time inconsistent, it is possible to
capture some form of the DPP by extending the state space, see e.g., Lauri%
\`{e}re \& Pironneau \cite{LP14}, (see Bensoussan, Frehse \& Yam \cite%
{bfy13, bfy15, bfy17}) adopted the hypothesis that the existence at all
times of a density for the marginal distribution of the state process and
transformed the prime problem into a density control problem with a family
of deterministic controls. Then, they established a DPP and get the
corresponding Bellman's equation in the space of density functions. Pham \&
Wei \cite{PW18} obtained the DPP for closed-loop controls. For open-loop
controls, the related topics can be found in Cosso \& Pham \cite{CH2019} for
McKean--Vlasov differential games and in Bayraktar, Cosso \& Pham \cite%
{BCP2018} for the so-called randomised DPP. If involving the common noise,
Pham \& Wei \cite{PW2017} proved a DPP where the control process is adapted
to the common noise filtration. Besides, Bouchard, Djehiche \& Kharroubi
\cite{BDK2020} investigated a stochastic McKean--Vlasov target problem, in
which the controlled process satisfies some target marginal constraints and
established a general geometric dynamic programming (see also \cite{ST2002}%
). Djete, Possama\"{\i} \& Tan \cite{DPT2022}, however, employed the
measurable selection techniques to prove the DPP.

In the history, the indefinite stochastic LQ theory has been widely
developed and found many interesting and important applications. Chen, Li \&
Zhou \cite{clz98}studied a kind of indefinite LQ problem based on Riccati
equation. Ait Rami, Moore, \& Zhou \cite{amz01} showed that the solvability
of the generalized Riccati equation is sufficient and necessary condition
for the well-posedness of the indefinite LQ problem. Subsequent research
includes various cases, and refer to Kohlmann \& Zhou, \cite{kz00}, Qian \&
Zhou \cite{qz13}. For the positive definite case, mean field-LQ problems
have been investigated over the past decade. Yong \cite{yong13} (2013)
considered an mean field LQ problem with deterministic coefficients over a
finite time horizon, and presented the optimal feedback using a system of
Riccati equations. For more related works following-up Yong (2013), see \cite%
{sms14, hly19, lsy16, sw21, y17} and the monograph by Sun \& Yong, \cite%
{sy20}. Recently, Li, Li \& Yu \cite{lly20} study indefinite stochastic
mean-field linear--quadratic optimal control problems, which allow the
weighting matrices for state and control in the cost functional to be
indefinite. %

Indeed, the indefinite McKean-Vlasov LQ problems results from the
mean--variance portfolio selection problem. Markowitz initially proposed and
solved the this problem in the single-period setting in his Novel-Prize
winning work (Markowitz, 1952, 1959 \cite{Markowitz}), which laid the
foundation for the modern finance. Subsequently, this model was extended to
multi period/ continuous-time portfolio selection. Whenever one attempts to
solve the mean--variance portfolio selection, she/he has to handel to two
objectives: One is to minimize the difference between the terminal wealth
and its expected value; the other one is to maximize her expected terminal
wealth. Since there are two criteria in one cost functional, this stochastic
control problem is significantly different from the classic LQ problem. The
main reason essentially is due to the the nonlinear term of $(\mathbb{E}%
[X(T)])^{2}$. Li and Zhou (2000) embedded this problem into an auxiliary
stochastic LQ problem, which actually is one of indefinite LQ problems.

In this paper, we focus on McKean-Vlasov LQ Control under control
constraints. This framework has some obvious features in contrast to the
works mentioned above in the following:

\begin{itemize}
\item The McKean-Vlasov LQ problem above-mentioned requires the control
variable taking the value in the whole space, which is somewhat restrictive
from the view point of application, for instance, the continuous-time
mean-variance portfolio selection in the case where short-selling of stocks
is not allowed. Because of this, the elegant Riccati approach\footnote{%
The popular LQ theory typically asks the control to be unconstrained since
the optimal control constructed through the Riccati equation may not satisfy
the control constraint.} does not apply directly except for some special
framework (see Hu and Zhou \cite{hz05} for homogeneous case).

\item The maximum principle (or necessary condition) for optimal control
needs an adjoint equation (under convex control domain), and then drives the
optimal control via the Hamilton system, which requires the control weight $%
R $ non-singular. In our paper, $R\equiv 0.$ In addition, the appearance of $%
(\mathbb{E}[X(T)])^{2}$ makes the classical dynamic programming to lose
effectiveness. To overcome these difficulties, we sidestep this problem by
studying the corresponding infinite dimensional HJB equation\footnote{%
In fact, the Riccati equation is essentially the HJB equation after
separating the time and spatial variables.}, taking the probability
distribution as an argument. The merit of this setting is embodied in the
way to deal with the minimization of variance of $X(T)$ directly, avoiding
the discussion on Lagrange multiplier.

\item In contrast to Li, Zhou \& Lim \cite{LZL2002}, we derive four groups
of ODEs, the second one (see (\ref{ode2})) is a standard Riccati equation
(explicit solution is impossible). Based on these results, we revisit the
the continuous-time mean-variance portfolio selection under short-selling of
stocks prohibited, and capture the investment risk and the capital market
line at the same time.
\end{itemize}

The outline of this paper is as follows. After the preliminaries in section %
\ref{sect2}, we study a stochastic McKean-Vlasov LQ control problem under
constraints and obtain the optimal feedback control in Section \ref{sect3}.
As an application, in Section 4, we recall the mean variance portfolio
problem under a short-selling prohibition and get the investment risk and
the capital market line respectively. Section \ref{sect5} concludes the
paper. Finally, some well knows result are displayed in Section \ref{sect6}.

\section{Problem Formulation and Preliminaries}

\label{sect2}

\subsection{Notation}

We make use of the following notation:
\begin{equation*}
\begin{tabular}{crl}
$M^{\prime }$ & : & the transpose of any matrix or vector $M$; \\
$\Vert M\Vert $ & : & $\sqrt{\sum_{i,j}m_{ij}^{2}}$ for any matrix or vector
$M=(m_{ij})$; \\
$\mathbb{R}^{n}$ & : & $n$ dimensional real Euclidean space; \\
$\mathbb{R}_{+}^{n}$ & : & the subset of $\mathbb{R}^{n}$ consisting of
elements with nonnegative components.%
\end{tabular}%
\end{equation*}

The underlying uncertainty is generated by a fixed filtered complete
probability space $(\Omega ,\mathcal{F},P,\{\mathcal{F}_{t}\}_{t\geq 0})$ on
which is defined a standard $\{\mathcal{F}_{t}\}_{t\geq 0}$-adapted $m$%
-dimensional Brownian motion $W(t)\equiv (W^{1}(t),\cdots ,W^{m}(t))^{\top }$%
. We assume that there exists a sub-$\sigma $-algebra $\mathcal{G}$ of $%
\mathcal{F}$, with $\mathcal{G}$ \textquotedblleft rich
enough\textquotedblright\ as will be explained later. Moreover, we will
adopt the following notations, unless otherwise specified.

\begin{itemize}
\item Given a probability space $(\Omega ,\mathcal{F},P)$ with a filtration $%
\{\mathcal{F}_{t}|a\leq t\leq b\}(-\infty \leq a<b\leq +\infty )$, a Hilbert
space $\mathcal{H}$ with the norm $\Vert \cdot \Vert _{\mathcal{H}}$, define
the Banach space
\begin{equation*}
L_{\mathcal{F}}^{2}(0,T;\mathcal{H})\triangleq \left\{ \varphi (\cdot
)\left\vert
\begin{array}{l}
\varphi (\cdot )\text{ is an }\mathcal{F}_{t}\text{-adapted, }\mathcal{H}%
\text{-valued measurable } \\
\text{process on }[a,b]\text{ and }\mathbb{E}\left[ \int_{a}^{b}\Vert
\varphi (t,\omega )\Vert _{\mathcal{H}}^{2}\mathrm{d}t\right] <+\infty%
\end{array}%
\right. \right\}
\end{equation*}%
with the norm
\begin{equation*}
\Vert \varphi (\cdot )\Vert _{\mathcal{F},2}=\Big [\mathbb{E}%
\int_{a}^{b}\Vert \varphi (t,\omega )\Vert _{\mathcal{H}}^{2}\mathrm{d}t\Big
]^{\frac{1}{2}}<+\infty .
\end{equation*}%
Besides, let $\mathcal{H}^{p}\left[ 0,T\right] =\mathcal{L}_{\mathcal{F}%
}^{p}\left( \Omega ;C\left( \left[ 0,T\right] ;\mathbb{R}^{n}\right) \right)
$ with
\begin{equation*}
\left\Vert \varphi \left( \cdot \right) \right\Vert _{\mathcal{H}^{p}}=\left[
\mathbb{E}\left( \sup_{t\in \left[ 0,T\right] }\left\vert \varphi \left(
t\right) \right\vert ^{p}\right) \right] ^{1\wedge \frac{1}{p}},\text{ }%
\forall \varphi \in \mathcal{H}^{p}\left[ 0,T\right] .
\end{equation*}

\item A set $\mathcal{U}_{ad}^{p,+}\left[ 0,T\right] $ of admissible
controls is defined by
\begin{equation*}
\mathcal{U}_{ad}^{p,+}\left[ 0,T\right] \triangleq \left\{ u\left( \cdot
\right) \left\vert
\begin{array}{l}
u\left( t\right) \in \mathbb{R}_{+}^{m}\text{ is an }\mathcal{F}_{t}\text{%
-adapted} \\
\text{and }\left[ \mathbb{E}\left( \int_{0}^{T}\left\vert u\left( s\right)
\right\vert ^{2}\mathrm{d}s\right) ^{\frac{p}{2}}\right] ^{1\wedge \frac{1}{p%
}}<\infty%
\end{array}%
\right. \text{ }\right\} .
\end{equation*}

\item For vectors $\alpha ,\beta \in \mathbb{R}^{n}$, $\alpha \centerdot
\beta =\sum_{i=1}^{n}\alpha _{i}\beta _{i}$. For the matrix $%
M=(m_{ij})_{1\leq i,j\leq n}\in \mathbb{R}^{n\times n},$ Tr$\left( M\right)
=\sum_{i=1}^{n}m_{ii}$, the transpose of $M$ is $M^{\top }$. For any real
number we define $x^{+}:=$max$\{x,0\}$ and $x^{-}:=$max$\{-x,0\}$.

\item For any normed space $\left( E,\left\vert \cdot \right\vert \right) $,
$\mathcal{P}(E)$ is the set of all probability measures on $\left(
E,\left\vert \cdot \right\vert \right) $. For any $p\in N$, $\mathcal{P}%
_{p}(E)$ is the set of all probability measures of $p$th order on $\left(
E,\left\vert \cdot \right\vert \right) $, define by
\begin{equation*}
\mathcal{P}_{p}(E)=\left\{ \mu \in \mathcal{P}(E)\left\vert \left\Vert \mu
\right\Vert _{p}=\left( \int_{E}\left\vert x\right\vert ^{p}\mu \left(
\mathrm{d}x\right) \right) ^{\frac{1}{p}}<\infty \right. \right\} .
\end{equation*}%
For any probability measures $\mu ,\mu ^{\prime }$ in $\mathcal{P}_{p}(E)$,
the $p$th order Wasserstein distance on $\mathcal{P}_{p}(E)$ is defined as
\begin{equation*}
\mathcal{W}_{p}\left( \mu ,\mu ^{\prime }\right) =\inf_{\pi }\left(
\int_{E\times E}\left\vert z-z^{\prime }\right\vert ^{p}\pi \left(
dz,dz^{\prime }\right) \right) ^{\frac{1}{p}},
\end{equation*}%
where $\pi $ is a coupling of $\mu $ and $\mu ^{\prime }$ in the sense that $%
\pi \in \mathcal{P}(E\times E)$ with marginals $\mu $ and $\mu ^{\prime }$. $%
L^{2}(E)$ is the space of all square-integrable functions $f:E\rightarrow
\mathbb{R}$.

\item Let $L^{2}(\mathcal{F};E)$ as the space of all $E$-valued square
integrable random variables on $(\Omega ,\mathcal{F},P)$; for any $\varphi
\in L^{2}(\mathcal{F};E)$, we set $\left\Vert \varphi \right\Vert _{L^{2}}=%
\mathbb{E}\left[ \left\vert \varphi \right\vert ^{2}\right] ^{\frac{1}{2}}$.
We assume that the above sub-$\sigma $-field $\mathcal{G}$ of $\mathcal{F}$
which is independent of $\mathcal{F}_{\infty }$ and \textquotedblleft rich
enough\textquotedblright\ in the following sense:%
\begin{equation}
\mathcal{P}_{2}\left( \mathbb{R}^{n}\right) =\left\{ \mathbb{P}_{\xi
}\left\vert \xi \in L^{2}(\mathcal{G};E)\right. \right\} ,  \label{rich}
\end{equation}%
where $\mathbb{P}_{\xi }$ or $\mathcal{L}\left( \xi \right) $ denotes the
law of $\xi .$ From Lemma 2.1 in \cite{cg2022}, $\mathcal{G}$ is
\textquotedblleft rich enough\textquotedblright\ if and only if there exists
a $\mathcal{G}$-measurable random variable $U^{\mathcal{G}}:\Omega
\rightarrow \mathbb{R}$ having uniform distribution on $\left[ 0,1\right] $.
Particularly, if the probability space $\left( \Omega ,\mathcal{G},\mathbb{P}%
\right) $ is \emph{atomless} (namely, for any $A\in \mathcal{G}$ such that $%
\mathbb{P}\left( A\right) >0$ there exists $B\in \mathcal{G}$, $B\subset A$,
such that $0<\mathbb{P}\left( B\right) <\mathbb{P}\left( A\right) $), then
these two mentioned properties holds. (see page 352, \cite{cd18}).


\item The Lions derivative of a functional $f$, introduced in \cite{lions06}%
, is defined through the lift of $f.$ Take any function $f:\mathcal{P}_{2}(%
\mathbb{R}^{n})\rightarrow \mathbb{R}$, and let $\tilde{f}:L^{2}(\mathcal{F};%
\mathbb{R}^{n})\rightarrow \mathbb{R}$ be a life of $f$ such that $\tilde{f}%
\left( \xi \right) =f\left( \mathbb{P}_{\xi }\right) $ for any $\xi \in
L^{2}(\mathcal{F};\mathbb{R}^{n}).$ Then, $\tilde{f}$ is differentiable in
the Fr\'{e}chet sense at $\xi _{0}$ if there exists a linear continuous
mapping $D\tilde{f}\left( \xi _{0}\right) :L^{2}(\mathcal{F};\mathbb{R}%
^{n})\rightarrow \mathbb{R}$ such that%
\begin{equation*}
\tilde{f}\left( \xi \right) -\tilde{f}\left( \xi _{0}\right) =\mathbb{E}%
\left[ D\tilde{f}\left( \xi _{0}\right) \centerdot \left( \xi -\xi
_{0}\right) \right] +o\left( \left\Vert \xi -\xi _{0}\right\Vert
_{L^{2}}\right) ,
\end{equation*}%
as $\left\Vert \xi -\xi _{0}\right\Vert _{L^{2}}\rightarrow 0.$Whenever $%
\tilde{f}$ is the lift of a function $\tilde{f}$ in $\mathcal{P}_{2}(\mathbb{%
R}^{n})$, the law of $D\tilde{f}\left( \xi _{0}\right) $ depends on $\xi
_{0} $ only via its law $\mathbb{P}_{\xi _{0}}$, and%
\begin{equation}
D\tilde{f}\left( \xi _{0}\right) =g_{0}\left( \xi _{0}\right) ,  \label{lg}
\end{equation}%
for some Borel function $g_{0}:\mathbb{R}^{n}\rightarrow \mathbb{R}^{n}$ .
(See e.g., \cite{cd18}, Chapter 5 and \cite{gt19}). The Lions derivative is
thus well defined.
\end{itemize}

\begin{definition}
\label{d1}We say $f$ is differentiable at $\mu _{0}=\mathbb{P}_{\xi _{0}}\in
\mathcal{P}_{2}(\mathbb{R}^{n})$ if its lift function $\tilde{f}$ is Fr\'{e}%
chet differentiable at $\xi _{0}$; and in this case, the function $g_{0}$ in
(\ref{lg}) is called the Lions derivative of $f$ at $\mu _{0}$, and denoted
as $\partial _{\mu }f\left( \mu _{0},\cdot \right) $.
\end{definition}

\begin{definition}
\label{d2}We say a function $f\in C^{1,1}\left( \mathcal{P}_{2}(\mathbb{R}%
^{n})\right) $, if its lift $\tilde{f}$ is Fr\'{e}chet differentiable, and
if there exists a continuous version of $\partial _{\mu }f\left( \mu
,x\right) $ such that (i) the mapping $\left( \mu ,x\right) \rightarrow
\partial _{\mu }f\left( \mu ,x\right) $ is jointly continuous with respect
to $\left( \mu ,x\right) $ and there is a constant $C>0$ such that
\begin{equation}
\left\vert \partial _{\mu }f\left( \mu ,x\right) \right\vert <C,  \label{l1}
\end{equation}%
for any $\mu \in \mathcal{P}_{2}(\mathbb{R}^{n})$ and any $x\in \mathbb{R}%
^{n}$; (ii) For any $\mu \in \mathcal{P}_{2}(\mathbb{R}^{n})$, the mapping $%
x\rightarrow \partial _{\mu }f\left( \mu ,x\right) $ is continuously
differentiable. Its derivative, denoted by $\partial _{x}\partial _{\mu
}f\left( \mu ,x\right) $, is jointly continuous with respect to $\left( \mu
,x\right) $; and there is a constant $C>0$ such that
\begin{equation}
\left\vert \partial _{x}\partial _{\mu }f\left( \mu ,x\right) \right\vert <C
\label{l2}
\end{equation}%
for any $\mu \in \mathcal{P}_{2}(\mathbb{R}^{n})$ and any $x\in \mathbb{R}%
^{n}$.
\end{definition}


\section{Constrained McKean-Vlasov stochastic LQ problem}

\label{sect3} We should point out two features which distinguish it from
conventional mean field LQ problems. One is that the the running cost of
this problem can be identically zero, namely, it is an indefinite stochastic
LQ control problem. The other feature, which also brings the main difficulty
of the problem, is that the control is constrained. Therefore, the
conventional \textquotedblleft completion of squares\textquotedblright\
approach\ and DPP to the unconstrained LQ problem, which involves the
Riccati equation, will no longer apply.

Consider the stochastic controlled systems:
\begin{equation}
\left\{
\begin{array}{lll}
\mathrm{d}X\left( s\right) & = & \left[ AX_{s}+\bar{A}\mathbb{E}X\left(
s\right) +Bu\left( s\right) +b_{0}\right] \mathrm{d}s+\sum_{j=1}^{m}D_{j}u%
\left( s\right) \mathrm{d}W^{j}\left( s\right) , \\
X\left( t\right) & = & \xi \in L^{2}(\mathcal{G\vee F}_{t};\mathbb{R}^{n})%
\end{array}%
\right.  \label{sde1}
\end{equation}%
with cost functional defined by%
\begin{eqnarray}
J\left( u\left( \cdot \right) \right) &=&\mathbb{E}\bigg [%
G_{1}X_{T}^{2}+G_{2}\left( \mathbb{E}X_{T}\right) ^{2}+G_{3}X_{T}  \notag \\
&&+\int_{t}^{T}\left( Q_{1}X_{s}^{2}+Q_{2}\left( \mathbb{E}X_{s}\right)
^{2}+Q_{3}X_{s}\right) \mathrm{d}s\bigg ].  \label{cost1}
\end{eqnarray}%
We now impose the following assumption to enure the well-defined our control
problem.

\begin{enumerate}
\item[\textbf{(A1)}] Assume that $Q_{1}>0,$ $Q_{1}+Q_{2}\geq 0,$ $Q_{3}\leq
0,$ $G_{1}>0,$ $G_{1}+G_{2}\geq 0$, $A>0,$ $\bar{A}>0$ and $b_{0}$ are
scalars, $B^{\top }\in \mathbb{R}_{+}^{m},D_{j}^{\top }\in \mathbb{R}^{m}$ $%
\left( j=1,\ldots ,m\right) $ are column vectors. The matrix $%
\sum_{j=1}^{m}D_{j}^{\top }D_{j}$ is postulated to be non-singular.
\end{enumerate}

As usual, our goal is to minimize the cost functional over the admissible
control set and define the value function as follows:%
\begin{equation}
V\left( t,\xi \right) =\inf_{u\in \mathcal{U}_{ad}^{2,+}\left[ 0,T\right]
}J\left( t,\xi ,u\right) .  \label{v1}
\end{equation}%
Under assumption (H1), the value function in (4.3) is law-invariant (cf.
\cite{cg2022}). Moreover, for $\mu =\mathbb{P}_{\xi }\in \mathcal{P}%
_{2}\left( \mathbb{R}\right) ,$ with a slight abuse of notation, write%
\begin{equation*}
V\left( t,\mu \right) =V\left( t,\xi \right) =\inf_{u\in \mathcal{U}%
_{ad}^{2,+}\left[ 0,T\right] }J\left( t,\xi ,u\right) .
\end{equation*}%
The multivariate linear McKean-Vlasov controlled dynamics with coefficients
presented by%
\begin{eqnarray*}
b\left( x,\mu ,u\right) &=&Ax+\bar{A}\bar{\mu}+Bu+b_{0}, \\
\sigma \left( x,\mu ,u\right) &=&Du, \\
\Phi \left( x,\mu \right) &=&G_{1}x^{2}+G_{2}\bar{\mu}^{2}+G_{3}x, \\
f\left( x,\mu ,u\right) &=&Q_{1}x^{2}+Q_{2}\bar{\mu}^{2}+Q_{3}x,
\end{eqnarray*}%
where
\begin{equation*}
\bar{\mu}=\int_{\mathbb{R}^{n}}x\mu \left( \mathrm{d}x\right) .
\end{equation*}%
Define an operator $H$ on $\mathbb{R}\times U\times \mathcal{P}_{2}\left(
\mathbb{R}\right) \times \mathbb{R}\times \mathbb{R}$, such that%
\begin{equation}
H\left( x,u,\mu ,p,Q\right) =f\left( x,u,\mu \right) +b\left( x,u,\mu
\right) p+\frac{1}{2}\text{\emph{Tr}}\left( \sigma \left( x,u,\mu \right)
^{\top }Q\right) ,  \label{ha}
\end{equation}

Clearly, the classical dynamic programming approach fails in this situation.
It is be scarcely possible to prove the DPP due to the appearance of the
expectation in the coefficients and nonlinear dependency with respect to it.
Therefore, problems like this actually belongs to a time inconsistent set.
Nevertheless, we should point out that, though he problem itself is time
inconsistent, it is possible to capture some form of the DPP by extending
the state space. For instance, ..

We now introduce the following HJB equation (derived from (4.7) in \cite%
{PW2017}), for any $\left( t,\mu \right) \in \left[ 0,T\right] \times
\mathcal{P}_{2}\left( \mathbb{R}\right) ,$%
\begin{equation}
\left\{
\begin{array}{l}
\partial _{t}\mathcal{V}\left( t,\mu \right) +\inf_{u\in \mathbb{R}_{+}^{m}}%
\mathbb{E}\left[ H\left( \xi ,u,\mu ,\partial _{\mu }\mathcal{V}\left( t,\mu
,\xi \right) ,\partial _{x}\partial _{\mu }\mathcal{V}\left( t,\mu ,\xi
\right) \sigma \left( \xi ,u,\mu \right) \right) \right] =0, \\
\mathcal{V}\left( T,\mu \right) =\left\langle \Phi \left( \cdot ,\mu \right)
,\mu \right\rangle ,%
\end{array}%
\right.  \label{hjb1}
\end{equation}%
where $\left\langle \Phi \left( \cdot ,\mu \right) ,\mu \right\rangle =\int_{%
\mathbb{R}}\Phi \left( x,\mu \right) \mu \left( \mathrm{d}x\right) .$

\begin{remark}
In HJB equation (\ref{hjb1}), the terms $\partial _{\mu }\mathcal{V}\left(
t,\mu ,\xi \right) $ and $\partial _{x}\partial _{\mu }\mathcal{V}\left(
t,\mu ,\xi \right) $ are well-defined according to Definition \ref{d1} and
Definition \ref{d2}. We remark that the expectation involved in (\ref{hjb1})
is a function of $\left( t,\mu ,\xi \right) $, so the optimal control $%
u^{\ast }$ takes the form $u^{\ast }\left( t,\mu \right) $ which is
important to study the control constraints problem below. Image that $%
\inf_{u\in \mathbb{R}_{+}^{m}}$ is inside of the expectation $\mathbb{E}$
and thus the optimal control depends on $x$ as well.
\end{remark}

Now let us consider the above McKean-Vlasov LQ problem (\ref{sde1})--(\ref%
{v1}).

Set
\begin{equation}
\bar{z}:=\underset{z\in \lbrack 0,\infty )^{m}}{\mathop{\rm argmin}\limits}%
\frac{1}{2}\left\Vert \left( D^{\prime }\right) ^{-1}\bar{z}+\left(
D^{\prime }\right) ^{-1}B^{\prime }\right\Vert ^{2}  \label{qup}
\end{equation}%
and
\begin{equation}
\bar{\vartheta}:=\left( D^{\prime }\right) ^{-1}\bar{z}+\left( D^{\prime
}\right) ^{-1}B.  \label{kesai}
\end{equation}%
Note that $\bar{\vartheta}$ is a column vector \textit{independent} of $x$.
We will use $\bar{\vartheta}$ to express the optimal feedback control later.


For any constant $\Gamma ,$ we set
\begin{equation*}
\bar{\mu}_{2}\left( \Gamma \right) =\Gamma \int_{\mathbb{R}^{d}}x^{2}\mu
\left( \mathrm{d}x\right) ,\text{ Var}\left( \mu \right) \left( \Gamma
\right) =\bar{\mu}_{2}\left( \Gamma \right) -\bar{\mu}^{2}\Gamma ,\text{ }%
\mu \in \mathcal{P}_{2}\left( \mathbb{R}\right) .
\end{equation*}%
We are going to find a value function $\mathcal{V}\left( t,\mu \right) ,$ $%
\mu =\mathbb{P}_{\xi }$ of the following type:%
\begin{equation*}
\mathcal{V}\left( t,\mu \right) =\text{Var}\left( \mu \right) \left(
P_{1}\left( t\right) \right) +P_{2}\left( t\right) \bar{\mu}^{2}+P_{3}\left(
t\right) \bar{\mu}+P_{4}\left( t\right) ,
\end{equation*}%
where $P_{1},P_{2}\in C^{1}\left( \left[ 0,T\right] ;\mathbb{R}\right) ,$ $%
P_{3}\in C^{1}\left( \left[ 0,T\right] ;\mathbb{R}\right) $ and $P_{4}\in
C^{1}\left( \left[ 0,T\right] ;\mathbb{R}\right) .$ It is easy to compute%
\begin{eqnarray*}
\partial _{t}\mathcal{V}\left( t,\mu \right) &=&\text{Var}\left( \mu \right)
\left( P_{1}^{\prime }\left( t\right) \right) +P_{2}^{\prime }\left(
t\right) \bar{\mu}^{2}+P_{3}^{\prime }\left( t\right) \bar{\mu}%
+P_{4}^{\prime }\left( t\right) , \\
\partial _{\mu }\mathcal{V}\left( t,\mu \right) \left( x\right)
&=&2P_{1}\left( t\right) \left( x-\bar{\mu}\right) +2P_{2}\left( t\right)
\bar{\mu}+P_{3}\left( t\right) , \\
\partial _{x}\partial _{\mu }\mathcal{V}\left( t,\mu \right) \left( x\right)
&=&2P_{1}\left( t\right) .
\end{eqnarray*}%
Now we are ready to derive the $P_{1},P_{2},P_{3}$ and $P_{4}$ according to
the HJB equation (\ref{hjb1}). First, we compare the terms in
\begin{eqnarray*}
\mathcal{V}\left( T,\mu \right) &=&\text{Var}\left( \mu \right) \left(
P_{1}\left( T\right) \right) +P_{2}\left( T\right) \bar{\mu}^{2}+P_{3}\left(
T\right) \bar{\mu}+P_{4}\left( T\right) \\
&=&\text{Var}\left( \mu \right) \left( G_{1}\right) +\left(
G_{1}+G_{2}\right) \bar{\mu}^{2}+G_{3}\bar{\mu},
\end{eqnarray*}%
which implies that
\begin{equation*}
P_{1}\left( T\right) =G_{1},\text{ }P_{2}\left( T\right) =G_{1}+G_{2},\text{
}P_{3}\left( T\right) =G_{3},\text{ }P_{4}\left( T\right) =0.
\end{equation*}%
Meanwhile
\begin{eqnarray}
&&\partial _{t}\mathcal{V}\left( t,\mu \right) +\inf_{u\in \mathbb{R}%
_{+}^{m}}\mathbb{E}\left[ H\left( \xi ,u,\mu ,\partial _{\mu }\mathcal{V}%
\left( t,\mu ,\xi \right) ,\partial _{x}\partial _{\mu }\mathcal{V}\left(
t,\mu ,\xi \right) \sigma \left( \xi ,u,\mu \right) \right) \right]  \notag
\\
&=&\text{Var}\left( \mu \right) \left( P_{1}^{\prime }\left( t\right)
\right) +P_{2}^{\prime }\left( t\right) \bar{\mu}^{2}+P_{3}^{\prime }\left(
t\right) \bar{\mu}+P_{4}^{\prime }\left( t\right)  \notag \\
&&+\inf_{u\in \mathbb{R}_{+}^{m}}\mathbb{E}\Big [\Big (\left( A\xi +\bar{A}%
\bar{\mu}+Bu+b_{0}\right) \centerdot \left( 2P_{1}\left( t\right) \left( \xi
-\bar{\mu}\right) +2P_{2}\left( t\right) \bar{\mu}+P_{3}\left( t\right)
\right)  \notag \\
&&+\text{Tr}\left( u^{\top }D^{\top }P_{1}\left( t\right) Du\right)
+Q_{1}\xi \centerdot \xi +Q_{2}\bar{\mu}^{2}+Q_{3}\centerdot \xi \Big )\Big ]
\notag \\
&=&\text{Var}\left( \mu \right) \left( P_{1}^{\prime }\left( t\right)
\right) +P_{2}^{\prime }\left( t\right) \bar{\mu}^{2}+P_{3}^{\prime }\left(
t\right) \bar{\mu}+P_{4}^{\prime }\left( t\right)  \notag \\
&&+\inf_{u\in \mathbb{R}_{+}^{m}}\mathbb{E}\Bigg \{\left( A\xi +\bar{A}\bar{%
\mu}+b_{0}\right) \centerdot \left( 2P_{1}\left( t\right) \left( \xi -\bar{%
\mu}\right) +2P_{2}\left( t\right) \bar{\mu}+P_{3}\left( t\right) \right)
\notag \\
&&+Q_{1}\xi ^{2}+Q_{2}\bar{\mu}^{2}+Q_{3}\xi \Big \}  \notag \\
&&+2\left[ Bu\centerdot \left( P_{1}\left( t\right) \left( \xi -\bar{\mu}%
\right) +P_{2}\left( t\right) \bar{\mu}+\frac{1}{2}P_{3}\left( t\right)
\right) +\frac{1}{2}u^{\top }D^{\top }P_{1}\left( t\right) Du\right] \Bigg \}
\notag \\
&=&\text{Var}\left( \mu \right) \left( P_{1}^{\prime }\left( t\right)
+2AP_{1}\left( t\right) +Q_{1}\right)  \notag \\
&&+\left[ P_{2}^{\prime }\left( t\right) +2\left( A+\bar{A}\right)
P_{2}\left( t\right) +\left( Q_{1}+Q_{2}\right) \right] \bar{\mu}^{2}  \notag
\\
&&+\left[ P_{3}^{\prime }\left( t\right) +\left( A+\bar{A}\right)
P_{3}\left( t\right) +2P_{2}\left( t\right) b_{0}+Q_{3}\right] \bar{\mu}
\notag \\
&&+P_{4}^{\prime }\left( t\right) +b_{0}P_{3}\left( t\right)  \notag \\
&&+2P_{1}\left( t\right) \inf_{u\in \mathbb{R}_{+}^{m}}\left\{ \frac{1}{2}%
u^{\top }D^{\top }Du+\left( \frac{2P_{2}\left( t\right) \bar{\mu}%
+P_{3}\left( t\right) }{2P_{1}\left( t\right) }\right) Bu\right\} ,
\label{min}
\end{eqnarray}%
where $D^{\top }=\left( D_{1}^{\top },\ldots ,D_{m}^{\top }\right) .$

By Lemma \ref{lem:s1} with
\begin{equation*}
\alpha =-\left[ \frac{2P_{2}\left( t\right) \bar{\mu}+P_{3}\left( t\right) }{%
2P_{1}\left( t\right) }\right] >0,
\end{equation*}%
it follows that the minimizer of (\ref{min}) is achieved by%
\begin{equation}
u^{\ast }\left( t,\mu \right) =-D^{-1}\bar{\vartheta}\cdot \frac{%
2P_{2}\left( t\right) \bar{\mu}+P_{3}\left( t\right) }{2P_{1}\left( t\right)
}.  \label{opti1}
\end{equation}%
We now define the region $\Pi _{1}$ in the $\left( t,\mu \right) $ as%
\begin{equation}
\Pi _{1}=\left\{ \left( t,\mu \right) \in \left[ 0,T\right] \times \mathcal{P%
}_{2}\left( \mathbb{R}\right) \left\vert \frac{2P_{2}\left( t\right) \bar{\mu%
}+P_{3}\left( t\right) }{2P_{1}\left( t\right) }<0\right. \right\} .
\label{R1}
\end{equation}%
Hence, on $\Pi _{1},$ the value function admits
\begin{equation*}
\mathcal{V}\left( t,\mu \right) =\mathcal{V}_{1}\left( t,\mu \right) =\text{%
Var}\left( \mu \right) \left( P_{1}\left( t\right) \right) +P_{2}\left(
t\right) \bar{\mu}^{2}+P_{3}\left( t\right) \bar{\mu}+P_{4}\left( t\right) .
\end{equation*}%
By Theorem 4.2 in \cite{PW2017}, we conclude that $u^{\ast }\left( t,\mu
\right) $ defined above is an optimal control for $\mathcal{V}\left( t,\mu
\right) $ on $\Pi _{1}.$

Whilst
\begin{eqnarray*}
&&\inf_{u\in \mathbb{R}_{+}^{m}}\left\{ \frac{1}{2}u^{\top }D^{\top
}Du+\left( \frac{P_{2}\left( t\right) }{P_{1}\left( t\right) }\bar{\mu}+%
\frac{1}{2}\frac{P_{3}\left( t\right) }{P_{1}\left( t\right) }\right)
Bu\right\} \\
&=&-\frac{1}{2}\left[ \frac{P_{2}\left( t\right) }{P_{1}\left( t\right) }%
\bar{\mu}+\frac{1}{2}\frac{P_{3}\left( t\right) }{P_{1}\left( t\right) }%
\right] ^{2}\cdot \left\Vert \bar{\vartheta}\right\Vert ^{2} \\
&=&-\frac{1}{2}\left[ \frac{P_{2}^{2}\left( t\right) }{P_{1}^{2}\left(
t\right) }\bar{\mu}^{2}+\frac{P_{2}\left( t\right) P_{3}\left( t\right) }{%
P_{1}^{2}\left( t\right) }\bar{\mu}+\frac{1}{4}\frac{P_{3}^{2}\left(
t\right) }{P_{1}^{2}\left( t\right) }\right] \cdot \left\Vert \bar{\vartheta}%
\right\Vert ^{2}.
\end{eqnarray*}%
Substituting $u^{\ast }\left( t,\mu \right) $ back into (\ref{min}), we are
able to rewrite (\ref{min}) as follows:%
\begin{eqnarray}
0 &=&\partial _{t}\mathcal{V}\left( t,\mu \right) +\inf_{u\in \mathbb{R}%
_{+}^{m}}\mathbb{E}\left[ H\left( \xi ,u,\mu ,\partial _{\mu }\mathcal{V}%
\left( t,\mu ,\xi \right) ,\partial _{x}\partial _{\mu }\mathcal{V}\left(
t,\mu ,\xi \right) \sigma \left( \xi ,u,\mu \right) \right) \right]  \notag
\\
&=&\text{Var}\left( \mu \right) \left[ \dot{P}_{1}\left( t\right)
+2AP_{1}\left( t\right) +Q_{1}\right]  \notag \\
&&+\left[ \dot{P}_{2}\left( t\right) +2\left( A+\bar{A}\right) P_{2}\left(
t\right) -\frac{P_{2}^{2}\left( t\right) }{P_{1}\left( t\right) }\left\Vert
\bar{\vartheta}\right\Vert ^{2}+Q_{1}+Q_{2}\right] \bar{\mu}^{2}  \notag \\
&&+\left[ \dot{P}_{3}\left( t\right) +\left( \bar{A}+A\right) P_{3}\left(
t\right) +2P_{2}\left( t\right) b_{0}+Q_{3}-\frac{P_{2}\left( t\right)
P_{3}\left( t\right) }{P_{1}\left( t\right) }\left\Vert \bar{\vartheta}%
\right\Vert ^{2}\right] \bar{\mu}  \notag \\
&&+\dot{P}_{4}\left( t\right) +b_{0}P_{3}\left( t\right) -\frac{1}{4}\frac{%
P_{3}^{2}\left( t\right) }{P_{1}\left( t\right) }\left\Vert \bar{\vartheta}%
\right\Vert ^{2}.  \label{min2}
\end{eqnarray}%
Now comparing terms in Var$\left( \mu \right) $, $\bar{\mu}^{2}$, $\bar{\mu}$
in (\ref{min2}), we obtain the following ODEs system for $P_{1}\left(
t\right) $, $P_{2}\left( t\right) $, $P_{3}\left( t\right) $ and $%
P_{4}\left( t\right) $,%
\begin{equation}
\left\{
\begin{array}{l}
\dot{P}_{1}\left( t\right) +2AP_{1}\left( t\right) +Q_{1}=0, \\
P_{1}\left( T\right) =G_{1},%
\end{array}%
\right.  \label{ode1}
\end{equation}%
\begin{equation}
\left\{
\begin{array}{l}
\dot{P}_{2}\left( t\right) -\frac{\left\Vert \bar{\vartheta}\right\Vert ^{2}%
}{P_{1}\left( t\right) }P_{2}^{2}\left( t\right) +2\left( A+\bar{A}\right)
P_{2}\left( t\right) +Q_{1}+Q_{2}=0, \\
P_{2}\left( T\right) =G_{1}+G_{2},%
\end{array}%
\right.  \label{ode2}
\end{equation}%
\begin{equation}
\left\{
\begin{array}{l}
\dot{P}_{3}\left( t\right) +\left( \bar{A}+A-\frac{P_{2}\left( t\right) }{%
P_{1}\left( t\right) }\left\Vert \bar{\vartheta}\right\Vert ^{2}\right)
P_{3}\left( t\right) +2P_{2}\left( t\right) b_{0}+Q_{3}=0, \\
P_{3}\left( T\right) =G_{3},%
\end{array}%
\right.  \label{ode3}
\end{equation}%
and
\begin{equation}
\left\{
\begin{array}{l}
\dot{P}_{4}\left( t\right) +b_{0}P_{3}\left( t\right) -\frac{1}{4}\frac{%
P_{3}^{2}\left( t\right) }{P_{1}\left( t\right) }\left\Vert \bar{\vartheta}%
\right\Vert ^{2}=0, \\
P_{4}\left( T\right) =0.%
\end{array}%
\right.  \label{ode4}
\end{equation}

\begin{remark}
Clearly, $P_{2}\left( \cdot \right) $ in (\ref{ode2}) is a classical Riccati
equation. Generally, it is impossible to get the explicit the expression of
solution to (\ref{ode2}). Therefore, unlike in \cite{LZL2002}, the analysis
of value functions becomes more difficult. Nonetheless, whenever, $%
Q_{1}+Q_{2}=G_{1}+G_{2}=0,$ immediately, $P_{2}\left( t\right) \equiv 0,$ $%
\forall t\in \left[ 0,T\right] ,$ which is corresponding to the variance
minimization problem (see Section \ref{sec3}).
\end{remark}

Next we proceed to the region $\Pi _{2}$ defined by%
\begin{equation}
\Pi _{2}=\left\{ \left( t,\mu \right) \in \left[ 0,T\right] \times \mathcal{P%
}_{2}\left( \mathbb{R}\right) \left\vert \frac{2P_{2}\left( t\right) \bar{\mu%
}+P_{3}\left( t\right) }{2P_{1}\left( t\right) }>0\right. \right\} .
\label{R2}
\end{equation}%
Analogous to the derivations for the previous case, we obtain%
\begin{equation}
\left\{
\begin{array}{l}
\dot{\widetilde{P}}_{1}\left( t\right) +2A\widetilde{P}_{1}\left( t\right)
+Q_{1}=0, \\
\widetilde{P}\left( T\right) =G_{1},%
\end{array}%
\right.  \label{od1}
\end{equation}%
\begin{equation}
\left\{
\begin{array}{l}
\dot{\widetilde{P}}_{2}\left( t\right) +2\left( A+\bar{A}\right) \widetilde{P%
}_{2}\left( t\right) +Q_{1}+Q_{2}=0, \\
\widetilde{P}_{2}\left( T\right) =G_{1}+G_{2},%
\end{array}%
\right.  \label{od2}
\end{equation}%
\begin{equation}
\left\{
\begin{array}{l}
\dot{\widetilde{P}}_{3}\left( t\right) +\left( \bar{A}+A\right) \widetilde{P}%
_{3}\left( t\right) +2\widetilde{P}_{2}\left( t\right) b_{0}+Q_{3}=0, \\
\widetilde{P}_{3}\left( T\right) =G_{3},%
\end{array}%
\right.  \label{od3}
\end{equation}%
and
\begin{equation}
\left\{
\begin{array}{l}
\dot{\widetilde{P}}_{4}\left( t\right) +b_{0}\widetilde{P}_{3}\left(
t\right) =0, \\
\widetilde{P}_{4}\left( T\right) =0.%
\end{array}%
\right.  \label{od4}
\end{equation}%
Hence, on $\Pi _{2},$ the value function reads
\begin{equation*}
\mathcal{V}\left( t,\mu \right) =\mathcal{V}_{2}\left( t,\mu \right) =\text{%
Var}\left( \mu \right) \left( \tilde{P}_{1}\left( t\right) \right) +\tilde{P}%
_{2}\left( t\right) \bar{\mu}^{2}+\tilde{P}_{3}\left( t\right) \bar{\mu}+%
\tilde{P}_{4}\left( t\right) .
\end{equation*}%
Applying Theorem 4.2 in \cite{PW2017} again, clearly $u^{\ast }\left( t,\mu
\right) =0$ is an optimal control for $\mathcal{V}\left( t,\mu \right) $ on $%
\Pi _{2}.$

\begin{remark}
Note that, in contrast to the result presented in \cite{LZL2002}, the
optimal control $u^{\ast }\left( \cdot \right) $ in (\ref{opti1}) depends
not only on the parameter $\bar{\vartheta}$ but also on the probability
measure $\mu $. Besides $\bar{\vartheta}$ does not depend on $x$. This means
that $P_{i}(t),$ $i=1,\ldots ,4$, which also depend on $(t)$, do not depend
on $\mu $. Hence, the expressions for $\mathcal{V}_{t}(t,\mu )$, $V_{\mu
}(t,\mu )$ and $\partial _{x}\partial _{\mu }\mathcal{V}(t,\mu )$ do not
involve terms of the form $P_{i}(t),$ $i=1,\ldots ,4$, etc. Due to this the
closed form expressions for the value function can be obtained.
\end{remark}

It is necessary to point out that the region $\Pi _{2}$ depends on $%
P_{i},i=1,\ldots 4.$ Note that however, $\widetilde{P}_{2}\left( t\right) $
is a normal ODE, while $P_{2}$ is a classical Riccati equation (impossible
to get the explicit solution).

\begin{remark}
\label{keyrem}To interpret the roles of $Q_{i},i=1,\ldots ,3,$ in $%
P_{i},i=1,\ldots 4,$ we start with (\ref{ode2}). Defining $\eta \left(
t\right) =\frac{P_{3}\left( t\right) }{P_{2}\left( t\right) }$ and $\tilde{%
\eta}\left( t\right) =\frac{\tilde{P}_{3}\left( t\right) }{\tilde{P}%
_{2}\left( t\right) },$ it follows from (\ref{ode1}) and (\ref{ode2}) that
\begin{eqnarray*}
\dot{\eta}\left( t\right) &=&\frac{P_{2}\left( t\right) \dot{P}_{3}\left(
t\right) -P_{3}\left( t\right) \dot{P}_{2}\left( t\right) }{P_{2}^{2}\left(
t\right) } \\
&=&\frac{P_{2}\left( t\right) \left[ -\left( \bar{A}+A-\frac{P_{2}\left(
t\right) }{P_{1}\left( t\right) }\left\Vert \bar{\vartheta}\right\Vert
^{2}\right) P_{3}\left( t\right) -2P_{2}\left( t\right) b_{0}-Q_{3}\right] }{%
P_{2}^{2}\left( t\right) } \\
&&-\frac{P_{3}\left( t\right) \left[ \frac{\left\Vert \bar{\vartheta}%
\right\Vert ^{2}}{P_{1}\left( t\right) }P_{2}^{2}\left( t\right) -2\left( A+%
\bar{A}\right) P_{2}\left( t\right) -Q_{1}-Q_{2}\right] }{P_{2}^{2}\left(
t\right) } \\
&=&\frac{\left( A+\bar{A}\right) P_{2}\left( t\right) P_{3}\left( t\right)
-2b_{0}P_{2}^{2}\left( t\right) -P_{2}\left( t\right) Q_{3}+P_{3}\left(
t\right) \left( Q_{1}+Q_{2}\right) }{P_{2}^{2}\left( t\right) } \\
&=&\left( A+\bar{A}\right) \eta \left( t\right) -2b_{0}+\frac{Q_{1}+Q_{2}}{%
P_{2}\left( t\right) }-\frac{Q_{3}}{P_{2}\left( t\right) }.
\end{eqnarray*}%
Similarly,
\begin{equation*}
\dot{\tilde{\eta}}\left( t\right) =\left( A+\bar{A}\right) \tilde{\eta}%
\left( t\right) -2b_{0}+\frac{Q_{1}+Q_{2}}{\tilde{P}_{2}\left( t\right) }-%
\frac{Q_{3}}{\tilde{P}_{2}\left( t\right) }.
\end{equation*}%
But generally $P_{2}\left( t\right) \neq \tilde{P}_{2}\left( t\right) ,$
which immediately implies that, in general, $\frac{P_{3}\left( t\right) }{%
P_{2}\left( t\right) }\neq \frac{\tilde{P}_{3}\left( t\right) }{\tilde{P}%
_{2}\left( t\right) }.$ Particularly, if we suppose that
\begin{equation}
Q_{3}=Q_{1}+Q_{2}=0.  \label{Q}
\end{equation}%
Then $\eta \left( t\right) =\tilde{\eta}\left( t\right) ,$ namely,
\begin{eqnarray}
\eta \left( t\right) &=&\frac{P_{3}\left( t\right) }{P_{2}\left( t\right) }
\notag \\
&=&\frac{\tilde{P}_{3}\left( t\right) }{\tilde{P}_{2}\left( t\right) }
\notag \\
&=&\frac{G_{3}}{G_{1}+G_{2}}\exp \left\{ -\left( A+\bar{A}\right) \left(
T-t\right) \right\}  \notag \\
&&+\frac{2b_{0}}{A+\bar{A}}\left( 1-\exp \left( -\left( A+\bar{A}\right)
\left( T-t\right) \right) \right) .  \label{yita}
\end{eqnarray}
\end{remark}

Now we consider the switching curve $\Pi _{3}$ defined by%
\begin{equation}
\Pi _{3}=\left\{ \left( t,\mu \right) \in \left[ 0,T\right] \mathbb{\times }%
\mathcal{P}_{2}\left( \mathbb{R}\right) \left\vert \frac{2P_{2}\left(
t\right) \bar{\mu}+P_{3}\left( t\right) }{P_{1}\left( t\right) }=0\right.
\right\} ,  \label{R3}
\end{equation}%
where the \textit{discontinuous} of $\mathcal{V}$ may happen. According
Lemma \ref{lem:h}, we see that on $\Pi _{3}$ the unique minimizer $u^{\ast
}\left( t,\mu \right) =0.$

In addition, if $Q_{1}+Q_{2}=Q_{3}=0,$ from Remark \ref{keyrem}, it yields $%
\bar{\mu}=-\frac{\eta \left( t\right) }{2}$ and then
\begin{eqnarray*}
\mathcal{V}_{1}\left( t,\mu \right) &=&\text{Var}\left( \mu \right) \left(
P_{1}\left( t\right) \right) -\frac{\eta \left( t\right) ^{2}}{4}P_{2}\left(
t\right) +P_{4}\left( t\right) , \\
\mathcal{V}_{2}\left( t,\mu \right) &=&\text{Var}\left( \mu \right) \left(
\tilde{P}_{1}\left( t\right) \right) -\frac{\eta \left( t\right) ^{2}}{4}%
\tilde{P}_{2}\left( t\right) +\tilde{P}_{4}\left( t\right) .
\end{eqnarray*}

\begin{example}
Let $Q_{1}=Q_{2}=Q_{3}=b_{0}=0,G_{3}=-\beta \leq 0$ and $G_{2}=-G_{1}<0.$
Instantly,%
\begin{equation*}
\left\{
\begin{array}{l}
P_{1}\left( t\right) =G_{1}\exp \left\{ 2A\left( T-t\right) \right\} , \\
P_{2}\left( t\right) =0, \\
P_{3}\left( t\right) =-\beta \exp \left( r\left( T-t\right) \right) , \\
P_{4}\left( t\right) =\frac{\beta ^{2}}{4}-\frac{\beta ^{2}}{4}\exp \left(
\left\Vert \bar{\theta}\right\Vert ^{2}\left( T-t\right) \right) ,%
\end{array}%
\right. \left\{
\begin{array}{l}
\tilde{P}_{1}\left( t\right) =G_{1}\exp \left( 2A\left( T-t\right) \right) ,
\\
\tilde{P}_{2}\left( t\right) =0, \\
\tilde{P}_{3}\left( t\right) =-\beta \exp \left( r\left( T-t\right) \right) ,
\\
\tilde{P}_{4}\left( t\right) =0.%
\end{array}%
\right.
\end{equation*}%
We observe that if $\beta >0,$ then for any $\mu \in \mathcal{P}_{2}\left(
\mathbb{R}\right) ,$ $2P_{2}\left( t\right) \bar{\mu}+P_{3}\left( t\right)
<0,$ therefore, $\Pi _{2}=\mathbb{R}\times \mathcal{P}_{2}\left( \mathbb{R}%
\right) ,$ $\mathcal{V}\left( t,\mu \right) $ admits a unique smooth
solution
\begin{eqnarray*}
\mathcal{V}\left( t,\mu \right) &=&\text{Var}\left( \mu \right) \left(
G_{1}\exp \left\{ 2A\left( T-t\right) \right\} \right) -\beta \exp \left(
r\left( T-t\right) \right) \bar{\mu} \\
&&+\frac{\beta ^{2}}{4}-\frac{\beta ^{2}}{4}\exp \left( \left\Vert \bar{%
\theta}\right\Vert ^{2}\left( T-t\right) \right) ,
\end{eqnarray*}%
which fortunately corresponding to mean-variance problem (see Section \ref%
{sec3} below); However, if $\beta =0,$ then $2P_{2}\left( t\right) \bar{\mu}%
+P_{3}\left( t\right) \equiv 0,$ which means $\Pi _{3}=\mathbb{R}\times
\mathcal{P}_{2}\left( \mathbb{R}\right) ,$ so
\begin{equation*}
\mathcal{V}\left( t,\mu \right) =\text{Var}\left( \mu \right) \left(
G_{1}\exp \left\{ 2A\left( T-t\right) \right\} \right) ,
\end{equation*}%
which is not trivial, since at time $t>0,$ a agent possesses a random wealth
$\xi ,$ then the investment risk can be captued by $G_{1}\exp \left\{
2A\left( T-t\right) \right\} \mathbb{D}\xi .$
\end{example}

In general, the value function $\mathcal{V}\left( t,\mu \right) $ might
\emph{not} be continuous on $\Pi _{3}$, which is completely different from $%
\Gamma _{3}$ defined in \cite{LZL2002}. Nevertheless, due to the complexity
of $P_{2}\left( \cdot \right) ,$ we are able to present a partial result
currently. Analyzing the relationship of size of the terms between $\left(
P_{1}\left( t\right) ,\tilde{P}_{1}\left( t\right) \right) ,\left(
P_{2}\left( t\right) ,\widetilde{P}_{2}\left( t\right) \right) $ and $\left(
P_{4}\left( t\right) ,\widetilde{P}_{4}\left( t\right) \right) $ is equally
important.

\begin{lemma}
On $\Pi _{3},$ under \emph{(A1)}, in addition, suppose that $Q_{1}\geq 0,$ $%
G_{3}\leq 0,$ $A\geq 0.$ Then
\begin{equation*}
0<P_{1}\left( t\right) =\widetilde{P}_{1}\left( t\right) ,P_{2}\left(
t\right) \leq \widetilde{P}_{2}\left( t\right) ,P_{4}\left( t\right) \leq
\widetilde{P}_{4}\left( t\right) ,
\end{equation*}
\end{lemma}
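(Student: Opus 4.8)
The plan is to compare the two ODE systems $(\ref{ode1})$--$(\ref{ode4})$ and $(\ref{od1})$--$(\ref{od4})$ term by term, exploiting that the first system differs from the second only by the presence of the nonnegative Riccati correction terms involving $\|\bar\vartheta\|^2/P_1(t)$. First I would dispose of $P_1$: equations $(\ref{ode1})$ and $(\ref{od1})$ are \emph{identical} linear ODEs with the same terminal value $G_1$, hence $P_1(t)=\widetilde P_1(t)$; and since this common function satisfies $\dot P_1(t)=-2AP_1(t)-Q_1$ with $P_1(T)=G_1>0$, the variation-of-constants formula
\begin{equation*}
P_1(t)=G_1e^{2A(T-t)}+\int_t^T e^{2A(s-t)}Q_1\,\mathrm{d}s
\end{equation*}
together with $A\ge 0$, $G_1>0$, $Q_1\ge 0$ gives $P_1(t)>0$ on $[0,T]$. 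This positivity is what makes the Riccati nonlinearity in $(\ref{ode2})$ well-posed and sign-definite.

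Next I would compare $P_2$ and $\widetilde P_2$. Write $\Delta(t)=\widetilde P_2(t)-P_2(t)$; subtracting $(\ref{ode2})$ from $(\ref{od2})$, and using $P_1=\widetilde P_1>0$, one gets a \emph{linear} ODE for $\Delta$ of the form
\begin{equation*}
\dot\Delta(t)+\Big(2(A+\bar A)-\tfrac{\|\bar\vartheta\|^2}{P_1(t)}\big(P_2(t)+\widetilde P_2(t)\big)\Big)\Delta(t)=-\tfrac{\|\bar\vartheta\|^2}{P_1(t)}\widetilde P_2^{\,2}(t)\le 0,
\end{equation*}
with terminal condition $\Delta(T)=0$ (since $P_2(T)=\widetilde P_2(T)=G_1+G_2$). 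Here I have used the algebraic identity $P_2^2-\widetilde P_2^{\,2}=(P_2+\widetilde P_2)(P_2-\widetilde P_2)$ to linearize, moving part of the quadratic term into the coefficient of $\Delta$ and leaving the forcing term $-\|\bar\vartheta\|^2\widetilde P_2^{\,2}/P_1$, which is $\le 0$ because $P_1>0$. A backward Gronwall / integrating-factor argument then yields $\Delta(t)\ge 0$, i.e. $P_2(t)\le\widetilde P_2(t)$ on $[0,T]$. (One also records here that $\widetilde P_2(t)\ge 0$: it solves a linear ODE with nonnegative terminal datum $G_1+G_2\ge 0$ and nonnegative forcing $Q_1+Q_2\ge 0$, which justifies the sign of the forcing term above.)

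Finally, for $P_4$ versus $\widetilde P_4$, set $\Theta(t)=\widetilde P_4(t)-P_4(t)$ and subtract $(\ref{od4})$ from $(\ref{ode4})$. On the switching curve $\Pi_3$ we have $2P_2(t)\bar\mu+P_3(t)=0$, but $P_3$ and $\widetilde P_3$ enter $(\ref{ode4})$, $(\ref{od4})$ directly, so I would first need the comparison $P_3$ vs $\widetilde P_3$ as an intermediate step — most cleanly under the additional hypotheses $Q_3\le 0$, $G_3\le 0$ (already in (A1)/the statement), which force $P_3(t)\le 0$ and $\widetilde P_3(t)\le 0$ via their linear ODEs, and then control $|P_3|$ vs $|\widetilde P_3|$. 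Subtracting the $P_4$-equations gives
\begin{equation*}
\dot\Theta(t)+b_0\big(\widetilde P_3(t)-P_3(t)\big)+\tfrac{\|\bar\vartheta\|^2}{4P_1(t)}P_3^2(t)=0,\qquad \Theta(T)=0,
\end{equation*}
so $\Theta(t)=\int_t^T\big[b_0(\widetilde P_3-P_3)+\tfrac{\|\bar\vartheta\|^2}{4P_1}P_3^2\big]\mathrm{d}s$; the second integrand is $\ge 0$ because $P_1>0$, and the first must be shown nonnegative, which is where the hypotheses $b_0$ (its sign), $G_3\le 0$, $Q_3\le 0$, $A\ge 0$ get combined. \textbf{The main obstacle} is precisely this last step: because $P_2$ solves a genuine Riccati equation with no closed form, the difference $\widetilde P_3-P_3$ cannot be written explicitly, and one must instead propagate the already-established inequalities $P_1=\widetilde P_1>0$ and $0\le P_2\le\widetilde P_2$ through the linear ODEs $(\ref{ode3})$, $(\ref{od3})$ for $P_3,\widetilde P_3$ — noting that $P_3$ picks up the extra negative drift coefficient $-\|\bar\vartheta\|^2 P_2/P_1$ — to get a sign for $\widetilde P_3-P_3$, and then feed that into the formula for $\Theta$. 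I expect this to require the sign restriction on $b_0$ that is implicit in (A1), or else a separate estimate showing the $P_3^2$ term dominates; carrying this through carefully is the crux, the rest being routine backward Gronwall estimates.
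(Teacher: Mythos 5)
Your treatment of $P_1$ and $P_2$ is correct and essentially coincides with the paper's. The paper likewise solves (\ref{ode1}) explicitly to obtain $P_1(t)=\widetilde P_1(t)=G_1e^{2A(T-t)}+\frac{Q_1}{2A}(e^{2A(T-t)}-1)>0$, and proves $P_2\le\widetilde P_2$ by a backward Gronwall argument applied to $\vert(P_2-\widetilde P_2)^+\vert^2$, discarding the nonpositive term $-\Vert\bar\vartheta\Vert^2P_2^2/P_1$; this is the same mechanism as your linearization $P_2^2-\widetilde P_2^{\,2}=(P_2+\widetilde P_2)(P_2-\widetilde P_2)$ followed by an integrating factor. (Your parenthetical about needing $\widetilde P_2\ge0$ is superfluous: the forcing term $-\Vert\bar\vartheta\Vert^2\widetilde P_2^{\,2}/P_1$ is nonpositive simply because it is a square divided by $P_1>0$.)

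The genuine gap is the one you flag yourself: the inequality $P_4\le\widetilde P_4$ is never proved. Subtracting (\ref{ode4}) from (\ref{od4}) gives exactly
\begin{equation*}
\widetilde P_4(t)-P_4(t)=\int_t^T\Big[b_0\big(\widetilde P_3(s)-P_3(s)\big)+\tfrac{\Vert\bar\vartheta\Vert^2}{4P_1(s)}P_3^2(s)\Big]\,\mathrm{d}s,
\end{equation*}
and while the second summand is nonnegative, the first requires a sign for $b_0(\widetilde P_3-P_3)$ that you do not establish; since (A1) imposes no sign on $b_0$, and $P_3$, $\widetilde P_3$ solve genuinely different linear ODEs (one carries the extra drift $-\Vert\bar\vartheta\Vert^2P_2/P_1$ and the forcing $2P_2b_0$ rather than $2\widetilde P_2b_0$), this term is not obviously nonnegative. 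You should know, however, that the paper's own proof of this step is no more complete: it writes $P_4(t)=\big[\tfrac14\tfrac{P_3^2(t)}{P_1(t)}\Vert\bar\vartheta\Vert^2-b_0P_3(t)\big](t-T)$ and $\widetilde P_4(t)=-b_0P_3(t)(t-T)$, i.e.\ it treats the integrands as constant in $s$ and, more importantly, substitutes $P_3$ for $\widetilde P_3$ in the equation for $\widetilde P_4$, so that the $b_0$-terms cancel identically and only the manifestly nonpositive difference $\tfrac{\Vert\bar\vartheta\Vert^2}{4}\tfrac{P_3^2}{P_1}(t-T)$ remains. Done honestly, the term $b_0\int_t^T(\widetilde P_3-P_3)\,\mathrm{d}s$ survives, so the obstacle you identify is real; one needs either a sign hypothesis such as $b_0=0$ (which holds in the financial application of Section 4) or an actual comparison of $P_3$ and $\widetilde P_3$ propagated from $0\le P_2\le\widetilde P_2$ to close the argument.
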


\begin{proof}
First after some basic derivation, we get
\begin{equation*}
P_{1}\left( t\right) =\widetilde{P}_{1}\left( t\right) =G_{1}e^{2A\left(
T-t\right) }+\frac{Q_{1}}{2A}\left( e^{2A\left( T-t\right) }-1\right) >0.
\end{equation*}%
Now we define $\Delta P_{2}\left( t\right) =P_{2}\left( t\right) -\tilde{P}%
_{2}\left( t\right) .$ Then
\begin{eqnarray*}
&&\left\vert \Delta P_{2}\left( t\right) ^{+}\right\vert ^{2} \\
&=&\int_{t}^{T}2\mathbf{I}_{\left\{ \Delta P_{2}\left( s\right) >0\right\}
}\Delta P_{2}\left( s\right) ^{+}\left[ -\frac{\left\Vert \bar{\vartheta}%
\right\Vert ^{2}}{P_{1}\left( s\right) }P_{2}^{2}\left( s\right) +2\left( A+%
\bar{A}\right) \Delta P_{2}\left( s\right) ^{+}\right] \mathrm{d}s \\
&\leq &\int_{t}^{T}4\left( A+\bar{A}\right) \mathbf{I}_{\left\{ \Delta
P_{2}\left( s\right) >0\right\} }\left\vert \Delta P_{2}\left( s\right)
^{+}\right\vert ^{2}\mathrm{d}s.
\end{eqnarray*}%
Immediately, from the backward Gronwall Bellman Lemma (see Lemma \ref{gron}
in Appendix), we have $\left\vert \Delta P_{2}\left( t\right)
^{+}\right\vert ^{2}\equiv 0,$ so $P_{2}\left( t\right) \leq \tilde{P}%
_{2}\left( t\right) ,$ $\forall t\in \left[ 0,T\right] .$ As for $%
P_{4}\left( t\right) ,$ we have%
\begin{eqnarray*}
P_{4}\left( t\right) &=&\left[ \frac{1}{4}\frac{P_{3}^{2}\left( t\right) }{%
P_{1}\left( t\right) }\left\Vert \bar{\vartheta}\right\Vert
^{2}-b_{0}P_{3}\left( t\right) \right] \left( t-T\right) , \\
\tilde{P}_{4}\left( t\right) &=&-b_{0}P_{3}\left( t\right) \left( t-T\right)
.
\end{eqnarray*}%
Clearly, $P_{4}\left( t\right) \leq \tilde{P}_{4}\left( t\right) ,$ $\forall
t\in \left[ 0,T\right] .$ The proof is thus complete.
\end{proof}

\begin{example}
Let us consider the case: $Q_{3}=Q_{1}+Q_{2}=0,$ $Q_{1}\geq 0,$ $b_{0}\geq
0,G_{3}\leq 0,A\geq 0.$ Apparently, the Riccati equation (\ref{ode2})
becomes a Bernoulli's equation. Therefore, due to $P_{1}\left( t\right) >0,$
one can get%
\begin{equation*}
P_{1}\left( t\right) =\widetilde{P}_{1}\left( t\right) =G_{1}e^{2A\left(
T-t\right) }+\frac{Q_{1}}{2A}\left( e^{2A\left( T-t\right) }-1\right) >0.
\end{equation*}%
\begin{eqnarray*}
P_{2}\left( t\right) &=&\left( \frac{1}{G_{1}+G_{2}}e^{-2\left( A+\bar{A}%
\right) \left( T-t\right) }+\frac{\left\Vert \bar{\vartheta}\right\Vert ^{2}%
}{2\left( A+\bar{A}\right) P_{1}\left( t\right) }\left[ 1-e^{-2\left( A+\bar{%
A}\right) \left( T-t\right) }\right] \right) ^{-1}>0, \\
\tilde{P}_{2}\left( t\right) &=&\left( \frac{1}{G_{1}+G_{2}}e^{-2\left( A+%
\bar{A}\right) \left( T-t\right) }\right) ^{-1}>0,
\end{eqnarray*}%
\begin{eqnarray*}
P_{4}\left( t\right) &=&\int_{t}^{T}\left( b_{0}P_{3}\left( s\right) -\frac{1%
}{4}\frac{P_{3}^{2}\left( s\right) }{P_{1}\left( s\right) }\left\Vert \bar{%
\vartheta}\right\Vert ^{2}\right) \mathrm{d}s, \\
\tilde{P}_{4}\left( t\right) &=&\int_{t}^{T}b_{0}P_{3}\left( s\right)
\mathrm{d}s.
\end{eqnarray*}%
Now
\begin{eqnarray*}
\mathcal{V}_{1}\left( t,\mu \right) &=&\text{Var}\left( \mu \right) \left[
G_{1}e^{2A\left( T-t\right) }+\frac{Q_{1}}{2A}\left( e^{2A\left( T-t\right)
}-1\right) \right] \\
&&-\frac{\eta \left( t\right) ^{2}}{4}\left( \frac{1}{G_{1}+G_{2}}%
e^{-2\left( A+\bar{A}\right) \left( T-t\right) }+\frac{\left\Vert \bar{%
\vartheta}\right\Vert ^{2}}{2\left( A+\bar{A}\right) P_{1}\left( t\right) }%
\left[ 1-e^{-2\left( A+\bar{A}\right) \left( T-t\right) }\right] \right)
^{-1} \\
&&+\int_{t}^{T}\left( b_{0}P_{3}\left( s\right) -\frac{1}{4}\frac{%
P_{3}^{2}\left( s\right) }{P_{1}\left( s\right) }\left\Vert \bar{\vartheta}%
\right\Vert ^{2}\right) \mathrm{d}s, \\
\mathcal{V}_{2}\left( t,\mu \right) &=&\text{Var}\left( \mu \right) \left[
G_{1}e^{2A\left( T-t\right) }+\frac{Q_{1}}{2A}\left( e^{2A\left( T-t\right)
}-1\right) \right] \\
&&-\frac{\eta \left( t\right) ^{2}}{4}\left( \frac{1}{G_{1}+G_{2}}%
e^{-2\left( A+\bar{A}\right) \left( T-t\right) }\right)
^{-1}+\int_{t}^{T}b_{0}P_{3}\left( s\right) \mathrm{d}s,
\end{eqnarray*}%
where $\eta \left( t\right) $ is defined in (\ref{yita}). These expressions
make the analysis rather complicated.
\end{example}

We are now asserting a result in the following.

\begin{theorem}
Assume that \emph{(A1)} holds. Then the average optimal control of Problem
(8) can be represented as
\begin{eqnarray}
u^{\ast }\left( t,\mu \right)  &=&\left( u_{1}^{\ast }\left( t,\mu \right)
,\ldots ,u_{m}^{\ast }\left( t,\mu \right) \right) ^{\top }  \notag \\
&=&\left\{
\begin{array}{ll}
-D^{-1}\bar{\vartheta}\cdot \frac{2P_{2}\left( t\right) \bar{\mu}%
+P_{3}\left( t\right) }{2P_{1}\left( t\right) }, & \text{if }\frac{%
2P_{2}\left( t\right) \bar{\mu}+P_{3}\left( t\right) }{2P_{1}\left( t\right)
}<0, \\
0, & \text{if }\frac{2P_{2}\left( t\right) \bar{\mu}+P_{3}\left( t\right) }{%
2P_{1}\left( t\right) }>0,%
\end{array}%
\right.   \label{optimalcontrol}
\end{eqnarray}%
Moreover, the value function can be shown%
\begin{equation}
\mathcal{V}\left( t,\mu \right) =\left\{
\begin{array}{ll}
\mathcal{V}_{1}\left( t,\mu \right) , & \text{if }\frac{2P_{2}\left(
t\right) \bar{\mu}+P_{3}\left( t\right) }{2P_{1}\left( t\right) }<0; \\
\mathcal{V}_{2}\left( t,\mu \right) , & \text{if }\frac{2P_{2}\left(
t\right) \bar{\mu}+P_{3}\left( t\right) }{2P_{1}\left( t\right) }>0.%
\end{array}%
\right.   \label{value}
\end{equation}
\end{theorem}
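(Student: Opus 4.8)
The plan is a verification argument built on the candidate value function already produced by the formal computation (\ref{min})--(\ref{min2}): first establish well-posedness of the two ODE systems and the attendant sign properties; then confirm that the pieced-together function solves the HJB equation (\ref{hjb1}) on each open region $\Pi_1$ and $\Pi_2$; finally invoke the verification theorem of Pham \& Wei (Theorem 4.2 in \cite{PW2017}) on each region. For the ODEs, (\ref{ode1}) and (\ref{od1}) coincide and are linear, so $P_1=\widetilde{P}_1=G_1e^{2A(T-t)}+\frac{Q_1}{2A}(e^{2A(T-t)}-1)>0$ on $[0,T]$ under (A1), hence $1/P_1$ is bounded; (\ref{ode2}) is a scalar Riccati equation whose quadratic coefficient $-\|\bar\vartheta\|^2/P_1(t)$ has the sign favourable to backward solvability, and with $G_1+G_2\ge 0$, $Q_1+Q_2\ge 0$ a comparison/Gronwall estimate yields a unique solution on all of $[0,T]$ with $0\le P_2(t)$ bounded and no finite escape time; (\ref{od2}) is its explicit linear counterpart; and given $P_1,P_2$ (resp. $\widetilde{P}_1,\widetilde{P}_2$) the equations (\ref{ode3})--(\ref{ode4}) (resp. (\ref{od3})--(\ref{od4})) are linear and solved by the integrals already displayed. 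This makes the regions $\Pi_1,\Pi_2,\Pi_3$ well defined.

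Next, inserting the ansatz $\mathcal{V}(t,\mu)=\mathrm{Var}(\mu)(P_1(t))+P_2(t)\bar\mu^2+P_3(t)\bar\mu+P_4(t)$ into (\ref{hjb1}) with the stated $\partial_t\mathcal{V}$, $\partial_\mu\mathcal{V}$, $\partial_x\partial_\mu\mathcal{V}$, the inner problem reduces, after taking the expectation, to $2P_1(t)\inf_{u\in\mathbb{R}_+^m}\{\frac12 u^\top D^\top D u+c(t,\mu)Bu\}$ with $c(t,\mu)=\frac{2P_2(t)\bar\mu+P_3(t)}{2P_1(t)}$. On $\Pi_1$, $c<0$, so taking $\alpha=-c>0$ Lemma \ref{lem:s1} --- through the quadratic program (\ref{qup})--(\ref{kesai}) defining $\bar z$ and $\bar\vartheta$ --- gives the unique minimizer $u^\ast(t,\mu)=-D^{-1}\bar\vartheta\,c(t,\mu)$, whose admissibility $u^\ast\in\mathbb{R}_+^m$ is exactly what the nonnegativity constraint in (\ref{qup}) guarantees, with minimal value $-\frac12 c(t,\mu)^2\|\bar\vartheta\|^2\cdot 2P_1(t)$. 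Setting the coefficients of $\mathrm{Var}(\mu)$, $\bar\mu^2$, $\bar\mu$ and the constant in (\ref{min2}) to zero is then precisely the system (\ref{ode1})--(\ref{ode4}) together with the terminal data read off from $\mathcal{V}(T,\cdot)$, so $\mathcal{V}_1$ solves (\ref{hjb1}) on $\Pi_1$. On $\Pi_2$, $c>0$, and Lemma \ref{lem:h} gives $u^\ast=0$ with inner value $0$; matching coefficients now yields the linear system (\ref{od1})--(\ref{od4}), so $\mathcal{V}_2$ solves (\ref{hjb1}) on $\Pi_2$. (On $\Pi_3$, $c=0$ and Lemma \ref{lem:h} again forces $u^\ast=0$.)

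On $\Pi_1$ the candidate $\mathcal{V}_1$ is $C^1$ in $t$ and, since its Lions derivatives are affine in $x$, belongs to $C^{1,1}(\mathcal{P}_2(\mathbb{R}))$ in $\mu$; it satisfies (\ref{hjb1}) with $\mathcal{V}_1(T,\cdot)=\langle\Phi(\cdot,\mu),\mu\rangle$, and the admissible feedback $u^\ast(t,\mu)$ attains the infimum there. Theorem 4.2 in \cite{PW2017} then identifies $V(t,\mu)=\mathcal{V}_1(t,\mu)$ on $\Pi_1$ and certifies optimality of $u^\ast$; the identical argument on $\Pi_2$ with $\mathcal{V}_2$ and $u^\ast\equiv 0$ gives $V=\mathcal{V}_2$ there. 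Collecting the two cases yields (\ref{optimalcontrol})--(\ref{value}).

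The main obstacle is the \emph{gluing across the switching set $\Pi_3$}: the regions are genuinely $(t,\mu)$-dependent and, as the preceding Lemma shows ($P_2\le\widetilde{P}_2$, $P_4\le\widetilde{P}_4$), $\mathcal{V}_1$ and $\mathcal{V}_2$ need not agree on $\Pi_3$, so $\mathcal{V}$ may be discontinuous and no single global verification theorem applies. To justify the region-wise use of \cite{PW2017} I would show that the closed-loop flow $s\mapsto(s,\mathbb{P}_{X^\ast(s)})$ driven by $u^\ast$, when started at a point of $\Pi_1$ (resp. $\Pi_2$), remains in $\overline{\Pi_1}$ (resp. $\overline{\Pi_2}$) up to $T$ --- a one-dimensional invariance check for the scalar $c(s,\mathbb{P}_{X^\ast(s)})$ using the ODE satisfied by $\bar\mu^\ast(s)$ together with (\ref{ode2})--(\ref{ode3}) --- or, failing that, that $\Pi_3$ is visited only on a Lebesgue-null set of times, so that the dynamic-programming comparison inequalities of \cite{PW2017} still close. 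Everything else reduces to the explicit ODE analysis and the finite-dimensional quadratic programs of Lemmas \ref{lem:s1} and \ref{lem:h}.
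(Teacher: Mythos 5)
Your proposal follows essentially the same route as the paper: insert the quadratic-in-$(\mathrm{Var}(\mu),\bar{\mu})$ ansatz into the lifted HJB equation (\ref{hjb1}), solve the constrained pointwise minimization via Lemmas \ref{lem:s1} and \ref{lem:h} to obtain the feedback $-D^{-1}\bar{\vartheta}\,c(t,\mu)$ on $\Pi_{1}$ and $u^{\ast}=0$ on $\Pi_{2}$, match coefficients to arrive at (\ref{ode1})--(\ref{ode4}) and (\ref{od1})--(\ref{od4}), and then invoke Theorem 4.2 of \cite{PW2017} region by region. The gluing/invariance issue across the switching set $\Pi_{3}$ that you flag as the main obstacle is genuine, but it is equally unresolved in the paper, which applies the verification theorem on each region without an invariance argument and only conjectures the behaviour of $\mathcal{V}$ on $\Pi_{3}$.
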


\begin{remark}
On $\Pi _{3},$ we conjecture that $\mathcal{V}\left( t,\mu \right) =\min
\left\{ \mathcal{V}_{1}\left( t,\mu \right) ,\mathcal{V}_{2}\left( t,\mu
\right) \right\} .$ In this case, the viscosity solution theory might be
borrowed, however this is beyond the scope of this article. We will consider
this issue in near future.
\end{remark}

\section{Application to Finance}

\label{sect4} In this section, we apply the general results established in
the previous section to a financial engineering. Suppose that a financial
market has $m+1$ assets evolved continuously on a finite horizon $[0,T]$. As
usual, one asset is a {bond (\emph{riskless})}, whose price denoted by $%
S_{0}(t),\;t\geq 0$, is driven by
\begin{equation}
\left\{
\begin{array}{rcl}
\mathrm{d}S_{0}\left( t\right) & = & rS_{0}\left( t\right) dt,\text{ }t\in %
\left[ 0,T\right] , \\
S_{0}\left( 0\right) & = & s_{0}>0,%
\end{array}%
\right.  \label{b}
\end{equation}%
where $r>0$ is the interest rate of the bond. The remaining $m$ assets are
stocks (risky), and their prices are described by
\begin{equation}
\left\{
\begin{array}{rcl}
\mathrm{d}S_{i}\left( t\right) & = & S_{i}\left( t\right) \left\{ b_{i}%
\mathrm{d}t+\sum_{j=1}^{m}\sigma _{ij}\mathrm{d}W^{j}\left( t\right)
\right\} ,\text{ }t\in \left[ 0,T\right] , \\
P_{i}\left( 0\right) & = & p_{i}>0,%
\end{array}%
\right.  \label{s}
\end{equation}%
where $b_{i}>r$ is the appreciation rate and $\sigma _{ij}$ is the
volatility coefficient. Denote $b:=(b_{1},\cdots ,b_{m})^{\prime }$ and $%
\sigma :=(\sigma _{ij})$. We assume throughout that $r$, $b$ and $\sigma $
are \emph{deterministic} constants. In addition, we impose that the
non-degeneracy condition
\begin{equation*}
\sigma \sigma ^{\prime }\geq \delta I,\quad
\end{equation*}%
where $\delta >0$ is a given constant, is satisfied. Also, we define the {%
relative risk coefficient}
\begin{equation*}
\theta \triangleq \sigma ^{-1}(b-r)\mathbf{1}),
\end{equation*}%
where $\mathbf{1}$ is the $m$-dimensional column vector with each component
equal to $1$.

Suppose an agent has an initial wealth $X_{0}>0$ and the total wealth of his
position at time $t\geq 0$ is $X(t)$, Then $X(t),$follows (see, e.g., \cite%
{ZhouLi})
\begin{equation}
\left\{
\begin{array}{rcl}
\mathrm{d}X\left( t\right) & = & \left\{ rX\left( t\right)
+\sum_{j=1}^{m}\left( b_{i}-r\right) u_{i}\right\} \mathrm{d}t \\
&  & +\sum_{j=1}^{m}\sum_{i=1}^{m}\sigma _{ij}u_{i}\left( t\right) \mathrm{d}%
W^{j}\left( t\right) ,\text{ }t\in \left[ 0,T\right] , \\
X\left( 0\right) & = & X_{0},%
\end{array}%
\right.  \label{w}
\end{equation}%
where $u_{i}(t),$ $i=0,1,\cdots ,m,$ denotes the total market value of the
agent's wealth in the $i$-th bond/stock. We call $u(t):=(u_{1}(t),\cdots
,u_{m}(t))$ the portfolio (which changes over time $t$). An important
restriction considered in this paper is the prohibition of short-selling the
stocks, i.e., it must be satisfied that $u_{i}(t)\geq 0$, $i=1,\cdots ,m$.
On the other hand, borrowing from the money market (at the interest rate $r$%
) is still allowed; that is, $u_{0}(t)$ is not explicitly constrained.

Mean-variance portfolio selection refers to the problem of finding an
allowable investment policy (i.e., a dynamic portfolio satisfying all the
constraints) such that the risk measured by
\begin{equation*}
J\left( u\right) =\alpha \mathbb{D}X(T)-\beta \mathbb{E}\left[ X(T)\right] +%
\mathbb{E}\left[ \int_{0}^{T}\left( \gamma X(t)^{2}-\gamma \left( \mathbb{E}%
\left[ X(t)\right] \right) ^{2}-\kappa X(t)\right) \mathrm{d}t\right] ,\text{
}
\end{equation*}%
where $\alpha >0,$ $\beta \geq 0,\gamma \geq 0,\kappa \geq 0$ and $\mathbb{D}%
X(T)$ denotes the variance of random variable $X(T),$ is minimized.

We recall the assumptions imposed in \cite{LZL2002}.

\begin{remark}
\label{ass:d} In \cite{LZL2002}, the authors assumed that the value of the
expected terminal wealth $d$ satisfies $d\geq X_{0}e^{rT}$, which means that
the investor's expected terminal wealth $d$ cannot be less than $X_{0}e^{rT}$
which coincides with the amount that he/she would earn if all of the initial
wealth is invested in the bond for the entire investment period. Otherwise,
the solution of the problem under $d<X_{0}e^{rT}$ seems to be foolish for
rational investors. In the current setting, the admissible controls belong
to a positive convex cone, so the value of the expected terminal wealth may
not be arbitrary. A natural question arises, of course, how to determine the
maximum value of $\mathbb{E}\left[ X(T)\right] $? This question also raised
in \cite{lx16}. Our next destination is to response this issue.
\end{remark}

\begin{definition}
\label{defi:control-positive} A portfolio $u(\cdot )$ is said to be
admissible if $u(\cdot )\in L_{\mathcal{F}}^{2}(0,T;\mathbb{R}_{+}^{m})$.
\end{definition}

\begin{definition}
The mean-variance portfolio selection problem is formulated as the following
optimization problem
\begin{equation}
\begin{array}{ll}
& \min J\left( u\right) , \\
\mbox{{\rm subject to}} & \left\{
\begin{array}{l}
u(\cdot )\in L_{\mathcal{F}}^{2}(0,T;\mathbb{R}_{+}^{m}), \\
(X(\cdot ),u(\cdot ))~\hbox{\rm admit
(\ref{w})}.%
\end{array}%
\right.%
\end{array}
\label{eq:utility-positive}
\end{equation}%
Moreover, the optimal control of (\ref{eq:utility-positive}) denoted by $%
u^{\ast }$ is called an efficient strategy, $J\left( u^{\ast }\right) $ is
the optimal value of (\ref{eq:utility-positive}) corresponding to $u^{\ast }$%
.
\end{definition}

\begin{remark}
We do not consider the the equality constraint $\mathbb{E}\left[ X(T)\right]
=d$ by introducing a Lagrange multiplier $\mu \in \mathbb{R}$ like \cite%
{LZL2002}.
\end{remark}

We now focus on the optimal control problem (\ref{eq:utility-positive}).

Set%
\begin{equation}
\left\{
\begin{array}{lll}
\mathrm{d}X\left( s\right) & = & \left[ AX_{s}+Bu\left( s\right) \right]
\mathrm{d}s+\sum_{j=1}^{m}D_{j}u\left( s\right) \mathrm{d}W^{j}\left(
s\right) , \\
X\left( t\right) & = & \xi ,\text{ with }\mu =\mathbb{P}_{\xi },%
\end{array}%
\right.  \label{sde3}
\end{equation}%
where $A=r,\bar{A}=0,B=\left( b_{1}-r,\ldots ,b_{m}-r\right)
,b_{0}=0,D_{j}=\left( \sigma _{1j},\ldots ,\sigma _{mj}\right) .$

Let
\begin{equation*}
\bar{\nu}=\arg \min_{\bar{\nu}\in \left[ 0,\infty \right) ^{m}}\frac{1}{2}%
\left\Vert \sigma ^{-1}\nu +\sigma ^{-1}\left( b-r\right) \mathbf{1}%
\right\Vert ^{2}
\end{equation*}%
and
\begin{equation*}
\bar{\theta}=\sigma ^{-1}\bar{\nu}+\sigma ^{-1}\left( b-r\right) \mathbf{1.}
\end{equation*}

We display
\begin{equation*}
\left\{
\begin{array}{l}
\dot{P}_{1}\left( t\right) +2rP_{1}\left( t\right) +\gamma =0, \\
P_{1}\left( T\right) =\alpha ,%
\end{array}%
\right.
\end{equation*}%
\begin{equation*}
\left\{
\begin{array}{l}
\dot{P}_{2}\left( t\right) -\frac{\left\Vert \bar{\theta}\right\Vert ^{2}}{%
P_{1}\left( t\right) }P_{2}^{2}\left( t\right) +2rP_{2}\left( t\right) =0,
\\
P_{2}\left( T\right) =0,%
\end{array}%
\right.
\end{equation*}%
\begin{equation*}
\left\{
\begin{array}{l}
\dot{P}_{3}\left( t\right) +\left( r-\frac{P_{2}\left( t\right) }{%
P_{1}\left( t\right) }\left\Vert \bar{\theta}\right\Vert ^{2}\right)
P_{3}\left( t\right) -\kappa =0, \\
P_{3}\left( T\right) =-\beta ,%
\end{array}%
\right.
\end{equation*}%
and
\begin{equation*}
\left\{
\begin{array}{l}
\dot{P}_{4}\left( t\right) -\frac{1}{4}\frac{P_{3}^{2}\left( t\right) }{%
P_{1}\left( t\right) }\left\Vert \bar{\theta}\right\Vert ^{2}=0, \\
P_{4}\left( T\right) =0,%
\end{array}%
\right.
\end{equation*}%
which can be explicitly solved such that
\begin{equation*}
\left\{
\begin{array}{l}
P_{1}\left( t\right) =\alpha e^{2r\left( T-t\right) }+\frac{\gamma }{2r}%
\left( e^{2r\left( T-t\right) }-1\right) >0, \\
P_{2}\left( t\right) =0, \\
P_{3}\left( t\right) =-\beta e^{r\left( T-t\right) }-\frac{\kappa }{r}\left(
e^{r\left( T-t\right) }-1\right) <0, \\
P_{4}\left( t\right) =-\frac{1}{4}\left\Vert \bar{\theta}\right\Vert
^{2}\int_{t}^{T}\frac{P_{3}^{2}\left( s\right) }{P_{1}\left( s\right) }%
\mathrm{d}s.%
\end{array}%
\right.
\end{equation*}%
It is easy to check that, for $\forall \mu \in \mathcal{P}_{2}\left( \mathbb{%
R}\right) ,$
\begin{eqnarray}
&&\frac{2P_{2}\left( t\right) \bar{\mu}+P_{3}\left( t\right) }{2P_{1}\left(
t\right) }  \notag \\
&=&\frac{P_{3}\left( t\right) }{2P_{1}\left( t\right) }  \notag \\
&=&\frac{-\beta e^{r\left( T-t\right) }-\frac{\kappa }{r}\left( e^{r\left(
T-t\right) }-1\right) }{2\left[ \alpha e^{2r\left( T-t\right) }+\frac{\gamma
}{2r}\left( e^{2r\left( T-t\right) }-1\right) \right] }<0.  \label{exa}
\end{eqnarray}%
Hence, $\Pi _{2}=\mathbb{R}\times \mathcal{P}_{2}\left( \mathbb{R}\right) ,$
which means
\begin{eqnarray}
\mathcal{V}\left( t,\mu \right) &=&\min_{u\left( \cdot \right) \in L_{%
\mathcal{F}}^{2}(0,T;\mathbb{R}_{+}^{m})}\Bigg \{\alpha \mathbb{D}X(T)-\beta
\mathbb{E}\left[ X(T)\right]  \notag \\
&&+\mathbb{E}\left[ \int_{0}^{T}\left( \gamma X(t)^{2}-\gamma \left( \mathbb{%
E}\left[ X(t)\right] \right) ^{2}-\kappa X(t)\right) \mathrm{d}t\right] %
\Bigg \}  \notag \\
&=&\mathcal{V}_{1}\left( t,\mu \right)  \notag \\
&=&\text{Var}\left( \mu \right) \left( P_{1}\left( t\right) \right)
+P_{2}\left( t\right) \bar{\mu}^{2}+P_{3}\left( t\right) \bar{\mu}%
+P_{4}\left( t\right)  \notag \\
&=&\left[ \alpha e^{2r\left( T-t\right) }+\frac{\gamma }{2r}\left(
e^{2r\left( T-t\right) }-1\right) \right] \mathbb{D}\xi  \notag \\
&&-\left[ \beta e^{r\left( T-t\right) }+\frac{\kappa }{r}\left( e^{r\left(
T-t\right) }-1\right) \right] \mathbb{E}\xi  \notag \\
&&-\frac{1}{4}\left\Vert \bar{\theta}\right\Vert ^{2}\int_{0}^{T}\frac{\left[
\beta e^{r\left( T-s\right) }+\frac{\kappa }{r}\left( e^{r\left( T-s\right)
}-1\right) \right] ^{2}}{\alpha e^{2r\left( T-s\right) }+\frac{\gamma }{2r}%
\left( e^{2r\left( T-s\right) }-1\right) }\mathrm{d}s.  \label{g1}
\end{eqnarray}%
The associated average optimal strategy can be expressed as, for $\forall
s\in \left[ t,T\right] ,$%
\begin{eqnarray*}
u^{\ast }\left( s,\mathbb{P}_{X^{\ast }\left( s\right) }\right) &=&-\sigma
^{-1}\bar{\theta}\frac{2P_{2}\left( t\right) \bar{\mu}+P_{3}\left( t\right)
}{2P_{1}\left( t\right) } \\
&=&-\sigma ^{-1}\bar{\theta}\frac{P_{3}\left( t\right) }{2P_{1}\left(
t\right) } \\
&=&\sigma ^{-1}\bar{\theta}\cdot \frac{\beta e^{r\left( T-t\right) }-\frac{%
\kappa }{r}\left( e^{r\left( T-t\right) }-1\right) }{2\left[ \alpha
e^{2r\left( T-t\right) }+\frac{\gamma }{2r}\left( e^{2r\left( T-t\right)
}-1\right) \right] }.
\end{eqnarray*}%
Particularly, at time $t=0$, if a investor possesses a deterministic wealth $%
\xi =X_{0}$, of course, its variance $\mathbb{D}\xi =0.$ Then, it follows
that
\begin{eqnarray*}
&&\min_{u\left( \cdot \right) \in L_{\mathcal{F}}^{2}(0,T;\mathbb{R}%
_{+}^{m})}\Bigg \{\alpha \mathbb{D}X(T)-\beta \mathbb{E}\left[ X(T)\right] \\
&&+\mathbb{E}\left[ \int_{0}^{T}\left( \gamma X(t)^{2}-\gamma \left( \mathbb{%
E}\left[ X(t)\right] \right) ^{2}-\kappa X(t)\right) \mathrm{d}t\right] %
\Bigg \} \\
&=&-\left[ \beta e^{r\left( T-t\right) }+\frac{\kappa }{r}\left( e^{r\left(
T-t\right) }-1\right) \right] X_{0} \\
&&-\frac{1}{4}\left\Vert \bar{\theta}\right\Vert ^{2}\int_{0}^{T}\frac{\left[
\beta e^{r\left( T-s\right) }+\frac{\kappa }{r}\left( e^{r\left( T-s\right)
}-1\right) \right] ^{2}}{\alpha e^{2r\left( T-s\right) }+\frac{\gamma }{2r}%
\left( e^{2r\left( T-s\right) }-1\right) }\mathrm{d}s.
\end{eqnarray*}%
If we consider $\alpha =1,\beta =0,$ (\ref{eq:utility-positive}) follows
that
\begin{equation}
\min_{u\left( \cdot \right) \in L_{\mathcal{F}}^{2}(0,T;\mathbb{R}_{+}^{m})}%
\left[ \mathbb{D}X(T)\right] =0.  \label{par1}
\end{equation}

We are now arriving at discussion of the boundedness of $\mathbb{E}\left[
X^{\ast }\left( T\right) \right] $, \emph{namely}, the capital market line.
Observe that the optimal control (\ref{optimalcontrol}) is composed of two
parts. So we will investigate these two items one by one.

\noindent \textbf{Case 1.} The controlled process $\mathbb{E}\left[
X_{s}^{\ast }\right] $ with
\begin{equation*}
u^{\ast }\left( s,\mathbb{P}_{X^{\ast }\left( s\right) }\right) =\sigma ^{-1}%
\bar{\theta}\cdot \frac{\beta e^{r\left( T-t\right) }-\frac{\kappa }{r}%
\left( e^{r\left( T-t\right) }-1\right) }{2\left[ \alpha e^{2r\left(
T-t\right) }+\frac{\gamma }{2r}\left( e^{2r\left( T-t\right) }-1\right) %
\right] }.
\end{equation*}%
reads%
\begin{equation}
\left\{
\begin{array}{lll}
\mathrm{d}\mathbb{E}X^{\ast }\left( s\right) & = & \left[ r\mathbb{E}\left[
X^{\ast }\left( s\right) \right] +B\sigma ^{-1}\bar{\theta}\cdot \frac{\beta
e^{r\left( T-t\right) }-\frac{\kappa }{r}\left( e^{r\left( T-t\right)
}-1\right) }{2\left[ \alpha e^{2r\left( T-t\right) }+\frac{\gamma }{2r}%
\left( e^{2r\left( T-t\right) }-1\right) \right] }\right] \mathrm{d}s, \\
X^{\ast }\left( 0\right) & = & X_{0}.%
\end{array}%
\right.  \label{sde4}
\end{equation}%
Set
\begin{equation*}
p\left( t\right) =B\sigma ^{-1}\bar{\theta}\cdot \frac{\beta e^{r\left(
T-t\right) }-\frac{\kappa }{r}\left( e^{r\left( T-t\right) }-1\right) }{2%
\left[ \alpha e^{2r\left( T-t\right) }+\frac{\gamma }{2r}\left( e^{2r\left(
T-t\right) }-1\right) \right] }.
\end{equation*}%
After simple calculation, it yields, for any $s\in \left[ t,T\right] ,$%
\begin{equation}
\mathbb{E}\left[ X^{\ast }\left( s\right) \right] =e^{rs}\left(
X_{0}+\int_{0}^{s}p\left( z\right) e^{-rz}\mathrm{d}z\right) .  \label{m1}
\end{equation}%
\textbf{Case 2.} Similarly, if $u^{\ast }\left( s,X^{\ast }\left( s\right) ,%
\mathbb{P}_{X^{\ast }\left( s\right) }\right) =0,$ we have%
\begin{equation}
\mathbb{E}\left[ X^{\ast }\left( s\right) \right] =e^{rs}X_{0}.  \label{m2}
\end{equation}

From (\ref{m1}) and (\ref{m2}), we assert that mean of future return $%
\mathbb{E}\left[ X(T)\right] $ satisfies that
\begin{equation}
e^{rT}X_{0}\leq \mathbb{E}\left[ X(T)\right] \leq e^{rT}\left(
X_{0}+\int_{0}^{T}p\left( z\right) e^{-rz}\mathrm{d}z\right) .  \label{m3}
\end{equation}%
under the short-selling of stocks prohibited.

The above discussion leads to the following theorem.

\begin{theorem}
At time $t\geq 0,$ if a investor possesses a random wealth $\xi \in L^{2}(%
\mathcal{G\vee F}_{t};\mathbb{R}^{n})$. Then, the average optimal strategy
of portfolio selection problem (\ref{eq:utility-positive}) can be written
as, for $\forall s\in \left[ t,T\right] $%
\begin{equation}
u^{\ast }\left( s,\mathbb{P}_{X^{\ast }\left( s\right) }\right) =\sigma ^{-1}%
\bar{\theta}\cdot \frac{\beta e^{r\left( T-t\right) }-\frac{\kappa }{r}%
\left( e^{r\left( T-t\right) }-1\right) }{2\left[ \alpha e^{2r\left(
T-t\right) }+\frac{\gamma }{2r}\left( e^{2r\left( T-t\right) }-1\right) %
\right] }.  \label{t1}
\end{equation}%
Moreover,
\begin{eqnarray}
&&\min_{u\left( \cdot \right) \in L_{\mathcal{F}}^{2}(0,T;\mathbb{R}%
_{+}^{m})}\Bigg \{\alpha \mathbb{D}X(T)-\beta \mathbb{E}\left[ X(T)\right]
\notag \\
&&+\mathbb{E}\left[ \int_{t}^{T}\left( \gamma X(t)^{2}-\gamma \left( \mathbb{%
E}\left[ X(t)\right] \right) ^{2}-\kappa X(t)\right) \mathrm{d}t\right] %
\Bigg \}  \notag \\
&=&\left[ \alpha e^{2r\left( T-t\right) }+\frac{\gamma }{2r}\left(
e^{2r\left( T-t\right) }-1\right) \right] \mathbb{D}\xi  \notag \\
&&-\left[ \beta e^{r\left( T-t\right) }+\frac{\kappa }{r}\left( e^{r\left(
T-t\right) }-1\right) \right] \mathbb{E}\xi  \notag \\
&&-\frac{1}{4}\left\Vert \bar{\theta}\right\Vert ^{2}\int_{t}^{T}\frac{\left[
\beta e^{r\left( T-s\right) }+\frac{\kappa }{r}\left( e^{r\left( T-s\right)
}-1\right) \right] ^{2}}{\alpha e^{2r\left( T-s\right) }+\frac{\gamma }{2r}%
\left( e^{2r\left( T-s\right) }-1\right) }\mathrm{d}s.  \label{t2}
\end{eqnarray}%
The capital market line $\mathbb{E}\left[ X(T)\right] $ satisfies
\begin{equation}
e^{rT}X_{0}\leq \mathbb{E}\left[ X(T)\right] \leq e^{rT}\left(
X_{0}+\int_{0}^{T}p\left( z\right) e^{-rz}\mathrm{d}z\right) .  \label{t3}
\end{equation}
\end{theorem}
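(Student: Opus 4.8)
The plan is to derive this theorem as a specialisation of the general Theorem of Section~\ref{sect3} to the financial data, and then to obtain the capital market line by a direct computation on the mean of the wealth process. First I would match coefficients: taking $A=r$, $\bar A=0$, $B=(b_1-r,\ldots,b_m-r)$, $b_0=0$ and $D_j=(\sigma_{1j},\ldots,\sigma_{mj})$ in \eqref{sde1}, and reading from $J(u)=\alpha\mathbb{D}X(T)-\beta\mathbb{E}[X(T)]+\mathbb{E}\int_0^T(\gamma X(t)^2-\gamma(\mathbb{E}X(t))^2-\kappa X(t))\,dt$ that $G_1=\alpha$, $G_2=-\alpha$, $G_3=-\beta$, $Q_1=\gamma$, $Q_2=-\gamma$, $Q_3=-\kappa$, so that \eqref{cost1} reproduces $J$; in particular $G_1+G_2=Q_1+Q_2=0$, $G_1=\alpha>0$ and $Q_3=-\kappa\le0$. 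One then checks that the hypotheses of the general Theorem hold, the only points needing comment being the mild boundary situations $\bar A=0$ and $\gamma=0$ (and $\beta=\kappa=0$), which are compatible with the ODE derivation because the strict inequalities in (A1) were used only for positivity that is here supplied directly by $\alpha>0$.

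Next I would solve the systems \eqref{ode1}--\eqref{ode4}. Since the terminal value and the inhomogeneous term of \eqref{ode2} both vanish (as $G_1+G_2=Q_1+Q_2=0$), we get $P_2\equiv0$, which removes the Riccati nonlinearity from \eqref{ode3}; thus \eqref{ode1}, \eqref{ode3} and \eqref{ode4} are elementary linear equations integrating to the closed forms displayed just before the theorem, notably $P_1(t)=\alpha e^{2r(T-t)}+\frac{\gamma}{2r}(e^{2r(T-t)}-1)>0$ and $P_3(t)=-\beta e^{r(T-t)}-\frac{\kappa}{r}(e^{r(T-t)}-1)\le0$. Because $P_2\equiv0$, the switching quantity collapses to $\frac{2P_2\bar\mu+P_3}{2P_1}=\frac{P_3}{2P_1}\le0$ uniformly in $\mu$, so (outside the degenerate case $\beta=\kappa=0$) every $(t,\mu)$ lies in the region where the first branch of \eqref{optimalcontrol} and \eqref{value} is active. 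Substituting the financial data (so that $\bar\vartheta$ is replaced by $\bar\theta$ and $D$ by $\sigma$) into \eqref{opti1} yields \eqref{t1}, and inserting the explicit $P_i$ into $\mathcal{V}_1(t,\mu)=\text{Var}(\mu)(P_1(t))+P_2(t)\bar\mu^2+P_3(t)\bar\mu+P_4(t)$, with $\text{Var}(\mu)(P_1)=P_1\mathbb{D}\xi$, $\bar\mu=\mathbb{E}\xi$ and $P_2=0$, together with the optimality statement from Theorem~4.2 of \cite{PW2017}, gives \eqref{t2}.

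Finally, for the capital market line I would take expectations in \eqref{sde3}. Writing $m(s):=\mathbb{E}[X^\ast(s)]$, the optimal feedback \eqref{t1} produces the linear ODE $\dot m(s)=r\,m(s)+p(s)$, $m(0)=X_0$, with solution \eqref{m1}, whereas the admissible control $u\equiv0$ gives $\dot m(s)=r\,m(s)$, i.e.\ \eqref{m2}; since any admissible $u$ is $\mathbb{R}_+^m$-valued and $b_i-r>0$, taking expectations shows $\mathbb{E}[X(s)]\ge e^{rs}X_0$, while the ``fully invested'' efficient trajectory \eqref{m1} supplies the upper endpoint, so $\mathbb{E}[X(T)]$ lies in the interval \eqref{t3}. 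The step needing the most care is the verification that the candidate $\mathcal{V}_1$ assembled from the ODE solutions is genuinely the value function and that \eqref{t1} is admissible and optimal: this requires checking the regularity demanded by the verification theorem of \cite{PW2017}, confirming that the optimising pair $(s,\mathbb{P}_{X^\ast(s)})$ never leaves the single region $\{P_3/2P_1<0\}$ along the closed-loop path (so no switching occurs and the McKean--Vlasov equation for $X^\ast$ is well posed), and disposing separately of the borderline cases $\beta=\kappa=0$ (where one lands on $\Pi_3$ with $u^\ast\equiv0$) and $\gamma=0$.
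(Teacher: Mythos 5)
Your proposal is correct and follows essentially the same route as the paper: identify $G_1=\alpha$, $G_1+G_2=Q_1+Q_2=0$, $G_3=-\beta$, $Q_1=\gamma$, $Q_3=-\kappa$, observe $P_2\equiv 0$ so the switching ratio reduces to $P_3/(2P_1)<0$ everywhere, solve the remaining linear ODEs explicitly, and read off \eqref{t1}--\eqref{t2} from \eqref{opti1} and $\mathcal{V}_1$, with the capital market line obtained by taking expectations in the state equation under the two control branches. Your observation that the lower bound $\mathbb{E}[X(s)]\geq e^{rs}X_0$ holds for \emph{every} admissible control (since $Bu\geq 0$) is a mild sharpening of the paper's case-by-case argument, and your explicit attention to the borderline cases $\beta=\kappa=0$ and $\gamma=0$ is a reasonable extra check the paper leaves implicit.
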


\section{Concluding remark}

\label{sect5} To conclude this paper, let us make some remarks. In this
paper, we have presented some results on the indefinite stochastic
McKean-Vlasov LQ problem with deterministic coefficients. The optimal
control can be represented as a state feedback form via the solutions of two
Riccati equations and the distribution of $\xi .$ We apply our theoretic
results to study the mean-variance problem under a short-selling prohibition
and to obtain the investment risk and the capital market line. There are
still some interesting extensions deserved attention, for instance, the
coefficients can be random, which is close to reality; Besides, note that
the at time $t$ and when total wealth distribution is $\mu $ , the optimal
dollar amount $u(t,\mu )$ invested in the risky asset is of the form (\ref%
{t1}). In particular, this implies that the dollar amount invested in the
risky asset does not depend on current wealth $\xi $ via its distribution.
This phenomenon is unreasonable from view point of economics, since it
implies that you will invest the same number of dollars in the stock if your
wealth is $100$ dollars as you would if your wealth is ten million dollars.
The reason for this anomaly is the fact that the risk aversion parameter is
assumed to be $1$, which is impractical (cf. \cite{bmz14}). A person's risk
preference apparently depends on how wealthy he owns; and hence the obvious
implication is that we should explicitly allow a function $\gamma $ to
depend on current wealth's distribution, that is $\gamma \left( \mu \right) $%
. We will study the mean-variance problems with a state dependent risk
aversion in our future work.

\section{Appendix}

\label{sect6} 
\label{app}

\subsection{Technique Lemmas}

We first recall some results from convex analysis from \cite{Rock}.

\begin{lemma}
\label{lem:s1} Let $s$ be a continuous, strictly convex quadratic function
\begin{equation}
\begin{array}{l}
s(z)\triangleq \frac{1}{2}\left\Vert ( \mathcal{D}^{\prime }) ^{-1}z+(
\mathcal{D}^{\prime }) ^{-1}\mathcal{B}^{\prime }\right\Vert ^{2}%
\end{array}
\label{eq:s1}
\end{equation}%
over $z\in \lbrack 0,\infty )^{m}$, where $\mathcal{B}^{\prime }\in \mathbb{R%
}_{+}^{m}$, $\mathcal{D}\in \mathbb{R}^{m\times m}$ and $\mathcal{D}^{\prime
}\mathcal{D}>0$. Then $s$ has a unique minimizer $\bar{z}\in \lbrack
0,\infty )^{m}$, i.e.,
\begin{equation*}
\left\Vert ( \mathcal{D}^{\prime }) ^{-1}\bar{z}+( \mathcal{D}^{\prime })
^{-1}\mathcal{B}^{\prime }\right\Vert ^{2}\leq \left\Vert ( \mathcal{D}%
^{\prime }) ^{-1}z+( \mathcal{D}^{\prime }) ^{-1}\mathcal{B}^{\prime
}\right\Vert ^{2},\quad \forall z\in \lbrack 0,\infty )^{m}.
\end{equation*}
\end{lemma}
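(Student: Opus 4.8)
The plan is to recast $s$ as one-half of a squared Euclidean distance in a suitably weighted inner product and then invoke the projection theorem onto a closed convex set. First I would note that the hypothesis $\mathcal{D}^{\prime }\mathcal{D}>0$ forces $\mathcal{D}$ (hence $\mathcal{D}^{\prime }$) to be nonsingular, so $(\mathcal{D}^{\prime })^{-1}$ is well defined, and that
\[
\left\Vert (\mathcal{D}^{\prime })^{-1}v\right\Vert ^{2}=\big\langle v,(\mathcal{D}^{\prime }\mathcal{D})^{-1}v\big\rangle \qquad \text{for all }v\in \mathbb{R}^{m},
\]
since $((\mathcal{D}^{\prime })^{-1})^{\prime }(\mathcal{D}^{\prime })^{-1}=(\mathcal{D}^{\prime }\mathcal{D})^{-1}$. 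Writing $M:=(\mathcal{D}^{\prime }\mathcal{D})^{-1}$, the assumption $\mathcal{D}^{\prime }\mathcal{D}>0$ gives $M>0$, so $\langle x,y\rangle _{M}:=\langle x,My\rangle $ is a genuine inner product on $\mathbb{R}^{m}$, with associated norm $\Vert \cdot \Vert _{M}$. Consequently
\[
s(z)=\tfrac{1}{2}\big\langle z+\mathcal{B}^{\prime },M(z+\mathcal{B}^{\prime })\big\rangle =\tfrac{1}{2}\big\Vert z-(-\mathcal{B}^{\prime })\big\Vert _{M}^{2},
\]
so minimizing $s$ over $C:=[0,\infty )^{m}$ is exactly the problem of projecting the point $-\mathcal{B}^{\prime }$ onto $C$ in the Euclidean space $(\mathbb{R}^{m},\langle \cdot ,\cdot \rangle _{M})$.

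Since $C$ is nonempty, closed, and convex, the Hilbert projection theorem (equivalently, the standard existence-and-uniqueness result for projections onto closed convex sets in convex analysis, as in \cite{Rock}) yields a unique $\bar{z}\in C$ minimizing $z\mapsto \Vert z-(-\mathcal{B}^{\prime })\Vert _{M}$, which is precisely the unique minimizer of $s$, and the asserted inequality is just the definition of the minimizer. If one prefers a self-contained argument: existence follows from the Weierstrass theorem applied to the continuous function $s$ on the set $\{z\in C:s(z)\leq s(0)\}$, which is compact because $s$ is coercive on $C$ (indeed $s(z)\geq \tfrac{1}{2}\lambda _{\min }(M)\Vert z+\mathcal{B}^{\prime }\Vert ^{2}\to \infty $ as $\Vert z\Vert \to \infty $, using $M>0$); and uniqueness follows from strict convexity of $s$, whose Hessian is the constant positive-definite matrix $M$.

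I do not anticipate a substantive obstacle; the only points requiring care are verifying $M>0$ from $\mathcal{D}^{\prime }\mathcal{D}>0$ and the elementary coercivity/compactness bookkeeping. Note that the extra hypothesis $\mathcal{B}^{\prime }\in \mathbb{R}_{+}^{m}$ is not needed for existence or uniqueness of $\bar{z}$; it is used downstream, when Lemma~\ref{lem:s1} is invoked in \eqref{min} with $\mathcal{B}^{\prime }=B^{\prime }\in \mathbb{R}_{+}^{m}$ and the scaling factor $\alpha =-\frac{2P_{2}(t)\bar{\mu}+P_{3}(t)}{2P_{1}(t)}>0$, to legitimately identify the minimizer with $\bar{\vartheta}$ in \eqref{kesai}.
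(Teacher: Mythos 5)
Your argument is correct. The reduction of $s$ to $\tfrac12\Vert z-(-\mathcal{B}^{\prime })\Vert _{M}^{2}$ with $M=(\mathcal{D}^{\prime }\mathcal{D})^{-1}\succ 0$ is right (in particular $((\mathcal{D}^{\prime })^{-1})^{\prime }(\mathcal{D}^{\prime })^{-1}=(\mathcal{D}^{\prime }\mathcal{D})^{-1}$ is verified correctly, and $\mathcal{D}^{\prime }\mathcal{D}>0$ does force invertibility of $\mathcal{D}$), so existence and uniqueness follow either from the projection theorem onto the closed convex cone $[0,\infty )^{m}$ or from your coercivity-plus-strict-convexity fallback; both are airtight. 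For comparison: the paper does not prove this lemma at all --- it simply cites Section 5.2 and Lemma 3.2 of Xu and Shreve \cite{XuShreve2} --- so your write-up supplies a self-contained argument where the paper relies on a reference, and the projection-onto-a-convex-cone viewpoint you use is essentially the mechanism underlying that cited result (it is also what produces the Kuhn--Tucker/Lagrange multiplier characterization $\bar{\nu}=\nabla s(\bar{z})$, $\bar{\nu}^{\prime }\bar{z}=0$ that the paper records immediately after the lemma). Your closing observation that $\mathcal{B}^{\prime }\in \mathbb{R}_{+}^{m}$ is not needed for existence or uniqueness is also accurate; that hypothesis only matters for how the minimizer is used downstream.
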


The Kuhn-Tucker conditions for the minimization of $s$ in (\ref{eq:s1}) over
$[0, \infty)^m$ lead to the {Lagrange multiplier} vector $\bar\nu \in [0,
\infty)^m$ such that $\bar\nu = \nabla s(\bar z) = (\mathcal{D}^{\prime }%
\mathcal{D})^{-1}\bar z + (\mathcal{D}^{\prime }\mathcal{D})^{-1}\mathcal{B}%
^{\prime }$ and $\bar\nu^{\prime }\bar z = 0$.

\vskip 24pt

\begin{lemma}
\label{lem:h} Let $h$ be a continuous, strictly convex quadratic function
\begin{equation*}
\begin{array}{l}
h(z)\triangleq \frac{1}{2}z^{\prime }\mathcal{D}^{\prime }\mathcal{D}%
z-\alpha \mathcal{B}z%
\end{array}%
\end{equation*}
over $z\in \lbrack 0,\infty )^{m}$, where $\mathcal{B}^{\prime }\in \mathbb{R%
}_{+}^{m}$, $\mathcal{D}\in \mathbb{R}^{m\times m}$ and $\mathcal{D}^{\prime
}\mathcal{D}>0$.

\begin{itemize}
\item[\textrm{(i)}] For every $\alpha >0$, $h$ has the unique minimizer $%
\alpha\mathcal{D}^{-1}\bar{\xi}\in \lbrack 0,\infty )^{m}$, where $\bar{\xi}%
=(\mathcal{D}^{\prime -1}\bar{z}+(\mathcal{D}^{\prime -1}\mathcal{B}^{\prime
}$. Here $\bar{z}$ is the minimizer of $s(z)$ specified in Lemma \ref{lem:s1}%
. Furthermore, $\bar{z}^{\prime }\mathcal{D}^{-1}\bar{\xi}=0$ and
\begin{equation*}
\begin{array}{l}
h(\alpha \bar{\nu})=h(\alpha \mathcal{D}^{-1}\bar{\xi})=-\frac{1}{2}\alpha
^{2}\Vert \bar{\xi}\Vert ^{2}.%
\end{array}%
\end{equation*}
\vspace{-1cm}

\item[\textrm{(ii)}] For every $\alpha < 0$, $h$ has the unique minimizer $0$%
.
\end{itemize}
\end{lemma}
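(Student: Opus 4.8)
The plan is to treat this as a strictly convex quadratic program over the closed convex cone $[0,\infty)^{m}$ and to characterize its (unique) minimizer via the Karush--Kuhn--Tucker (complementarity) conditions, and then to exhibit the KKT point explicitly using the data already produced in Lemma~\ref{lem:s1}. Throughout I would write $\nabla h(z)=\mathcal{D}^{\prime}\mathcal{D}\,z-\alpha\mathcal{B}^{\prime}$ for the gradient of $h$.

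First I would record the preliminaries. Since $\mathcal{D}^{\prime}\mathcal{D}>0$, the Hessian of $h$ is positive definite, so $h$ is strictly convex and coercive; hence $h$ attains a unique minimizer $z^{\ast}$ on the closed convex set $[0,\infty)^{m}$, and, the feasible set being polyhedral, $z^{\ast}$ is the unique vector satisfying the complementarity system $\nabla h(z^{\ast})\geq 0$, $z^{\ast}\geq 0$, $\langle\nabla h(z^{\ast}),z^{\ast}\rangle=0$. From Lemma~\ref{lem:s1} and the Kuhn--Tucker remark following it, the minimizer $\bar z$ of $s$ satisfies the analogous system with $\bar\nu:=\nabla s(\bar z)=(\mathcal{D}^{\prime}\mathcal{D})^{-1}(\bar z+\mathcal{B}^{\prime})\geq 0$, $\bar z\geq 0$, $\langle\bar\nu,\bar z\rangle=0$. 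Rewriting, this says $\mathcal{D}^{\prime}\mathcal{D}\,\bar\nu=\bar z+\mathcal{B}^{\prime}$, and since $\mathcal{D}^{-1}(\mathcal{D}^{\prime})^{-1}=(\mathcal{D}^{\prime}\mathcal{D})^{-1}$ one also gets $\bar\nu=\mathcal{D}^{-1}\bar\xi$ with $\bar\xi=(\mathcal{D}^{\prime})^{-1}(\bar z+\mathcal{B}^{\prime})$, so that $\|\bar\xi\|^{2}=(\bar z+\mathcal{B}^{\prime})^{\prime}(\mathcal{D}^{\prime}\mathcal{D})^{-1}(\bar z+\mathcal{B}^{\prime})=\langle\bar\nu,\bar z+\mathcal{B}^{\prime}\rangle=\langle\bar\nu,\mathcal{B}^{\prime}\rangle$ by complementary slackness.

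For part (i) ($\alpha>0$) I would verify that $z^{\ast}:=\alpha\bar\nu=\alpha\mathcal{D}^{-1}\bar\xi$ solves the complementarity system: it is nonnegative since $\alpha>0$ and $\bar\nu\geq 0$; using $\mathcal{D}^{\prime}\mathcal{D}\,\bar\nu=\bar z+\mathcal{B}^{\prime}$ gives $\nabla h(\alpha\bar\nu)=\alpha(\bar z+\mathcal{B}^{\prime})-\alpha\mathcal{B}^{\prime}=\alpha\bar z\geq 0$; and $\langle\nabla h(\alpha\bar\nu),\alpha\bar\nu\rangle=\alpha^{2}\langle\bar z,\bar\nu\rangle=0$. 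By uniqueness of the KKT point this is the minimizer, which in particular yields $\bar z^{\prime}\mathcal{D}^{-1}\bar\xi=\bar z^{\prime}\bar\nu=0$. For the minimal value, substitute and use $\mathcal{D}^{\prime}\mathcal{D}\,\bar\nu=\bar z+\mathcal{B}^{\prime}$ and $\langle\bar z,\bar\nu\rangle=0$ to obtain $h(\alpha\bar\nu)=\frac{1}{2}\alpha^{2}\langle\bar\nu,\bar z+\mathcal{B}^{\prime}\rangle-\alpha^{2}\langle\bar\nu,\mathcal{B}^{\prime}\rangle=-\frac{1}{2}\alpha^{2}\langle\bar\nu,\mathcal{B}^{\prime}\rangle$, and then the identity $\|\bar\xi\|^{2}=\langle\bar\nu,\mathcal{B}^{\prime}\rangle$ from the previous paragraph converts this into $-\frac{1}{2}\alpha^{2}\|\bar\xi\|^{2}$. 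For part (ii) ($\alpha<0$) I would simply check that $z^{\ast}=0$ solves the complementarity system: $z^{\ast}=0\geq 0$, $\nabla h(0)=-\alpha\mathcal{B}^{\prime}\geq 0$ since $-\alpha>0$ and $\mathcal{B}^{\prime}\in\mathbb{R}_{+}^{m}$, and $\langle\nabla h(0),0\rangle=0$; uniqueness then forces $z^{\ast}=0$.

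I do not anticipate a genuine obstacle here: the argument is a routine KKT verification. The only delicate points are keeping track of the signs in the complementarity conditions for the orthant constraints, and the three elementary identities linking $\bar\nu$, $\bar z$, $\mathcal{B}^{\prime}$, $\bar\xi$ — namely $\mathcal{D}^{\prime}\mathcal{D}\,\bar\nu=\bar z+\mathcal{B}^{\prime}$, $\bar\nu=\mathcal{D}^{-1}\bar\xi$, and $\|\bar\xi\|^{2}=\langle\bar\nu,\mathcal{B}^{\prime}\rangle$ — all of which follow from the definition of $\bar\xi$ in Lemma~\ref{lem:s1} together with the complementary slackness $\langle\bar\nu,\bar z\rangle=0$ inherited from that lemma.
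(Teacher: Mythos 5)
Your proof is correct. Note that the paper does not actually prove this lemma: it states after the two lemmas that Lemma~\ref{lem:s1} and Lemma~\ref{lem:h}(i) "are proved in Section 5.2 and Lemma 3.2 of \cite{XuShreve2}, while Lemma~\ref{lem:h}-(ii) is obvious." Your KKT verification is the standard argument behind that citation and makes the appendix self-contained: you correctly extract from the remark following Lemma~\ref{lem:s1} the three identities $\mathcal{D}^{\prime}\mathcal{D}\,\bar{\nu}=\bar{z}+\mathcal{B}^{\prime}$, $\bar{\nu}=\mathcal{D}^{-1}\bar{\xi}$ and (via complementary slackness) $\Vert\bar{\xi}\Vert^{2}=\langle\bar{\nu},\mathcal{B}^{\prime}\rangle$, and then check the complementarity system at $\alpha\bar{\nu}$ for $\alpha>0$ and at $0$ for $\alpha<0$; strict convexity plus the polyhedral feasible set makes the KKT conditions necessary and sufficient and the minimizer unique, so the verification closes the argument. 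The only caveat is bibliographic rather than mathematical: the sign conventions ($\bar{\nu}\geq 0$, $\bar{\nu}^{\prime}\bar{z}=0$) you rely on are exactly those asserted in the paper's Kuhn--Tucker remark, so nothing is circular.
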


Lemma \ref{lem:s1} and Lemma \ref{lem:h}-(i) are proved in Section 5.2 and
Lemma 3.2 of \cite{XuShreve2}, while Lemma \ref{lem:h}-(ii) is obvious.

\begin{remark}
\label{rem:h} Note that the vector $\bar{\xi}$ is independent of the
parameter $\alpha $.
\end{remark}

\begin{lemma}
\label{gron}Given a real valued function $g\geq 0$ and a integrable
real-valued functions $h$, if there exists a constant $K>0,$ for any $t\in %
\left[ 0,T\right] ,$ such that
\begin{equation*}
g\left( t\right) \leq h\left( t\right) +K\int_{t}^{T}g\left( s\right)
\mathrm{d}s.
\end{equation*}%
Then
\begin{equation*}
g\left( t\right) \leq h\left( t\right) +K\int_{t}^{T}e^{K\left( s-t\right)
}h\left( s\right) \mathrm{d}s.
\end{equation*}
\end{lemma}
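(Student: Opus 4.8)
The plan is to reduce this integral inequality to a first-order linear differential inequality for the tail integral and then resolve it with an integrating factor. First I would set $G(t):=\int_t^T g(s)\,\mathrm{d}s$, which is finite (the hypothesis being vacuous otherwise) and absolutely continuous on $[0,T]$ since $g$ is integrable, with $G(T)=0$ and $G'(t)=-g(t)$ for almost every $t$. The standing assumption $g(t)\leq h(t)+KG(t)$ then rewrites, for a.e. $t$, as $-G'(t)\leq h(t)+KG(t)$, that is, $G'(t)+KG(t)\geq -h(t)$.

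Next I would multiply this differential inequality by the positive factor $e^{Kt}$; since $t\mapsto e^{Kt}G(t)$ is absolutely continuous with a.e. derivative $e^{Kt}\big(G'(t)+KG(t)\big)$, this gives $\frac{\mathrm{d}}{\mathrm{d}t}\big(e^{Kt}G(t)\big)\geq -e^{Kt}h(t)$ a.e. Integrating from $t$ to $T$ and using $G(T)=0$ produces $-e^{Kt}G(t)\geq -\int_t^T e^{Ks}h(s)\,\mathrm{d}s$, hence $G(t)\leq \int_t^T e^{K(s-t)}h(s)\,\mathrm{d}s$. Substituting this bound on $G(t)$ back into the original inequality $g(t)\leq h(t)+KG(t)$ yields exactly $g(t)\leq h(t)+K\int_t^T e^{K(s-t)}h(s)\,\mathrm{d}s$; since the hypothesis is assumed pointwise for every $t$, so is the conclusion.

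The argument is essentially routine, and the only point deserving care --- the closest thing here to an obstacle --- is the justification of the calculus steps under mere integrability of $g$ and $h$: one should work with absolutely continuous functions and the almost-everywhere form of the fundamental theorem of calculus rather than with classical derivatives, which is legitimate on the compact interval $[0,T]$. Should a fully self-contained, elementary proof be preferred, I would instead iterate: substituting $g(s)\leq h(s)+K\int_s^T g(r)\,\mathrm{d}r$ into the integral term repeatedly and applying Fubini's theorem to collapse the iterated integrals, after $n$ steps one reaches
\begin{equation*}
g(t)\leq h(t)+\int_t^T h(s)\sum_{k=1}^{n}\frac{K^k(s-t)^{k-1}}{(k-1)!}\,\mathrm{d}s+\frac{K^{n+1}}{n!}\int_t^T (s-t)^n g(s)\,\mathrm{d}s .
\end{equation*}
The remainder is bounded by $\frac{K^{n+1}T^n}{n!}\int_t^T g(s)\,\mathrm{d}s$, which tends to $0$ as $n\to\infty$, while the finite sum converges to $K\int_t^T e^{K(s-t)}h(s)\,\mathrm{d}s$, so the same conclusion follows. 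I would present the integrating-factor computation as the main proof and relegate the iteration argument to a remark.
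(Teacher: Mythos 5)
Your proof is correct: this is the standard backward Gr\"onwall argument (tail integral $G(t)=\int_t^T g(s)\,\mathrm{d}s$, integrating factor $e^{Kt}$, integrate from $t$ to $T$, substitute back), and the iteration-plus-Fubini variant you sketch is also sound. The paper states this lemma in the appendix as a known ``technique lemma'' without any proof, so there is nothing to compare against; either of your two arguments would serve. One small caveat: the conclusion genuinely requires $\int_0^T g(s)\,\mathrm{d}s<\infty$, which the statement omits (only $h$ is assumed integrable), and your parenthetical that the hypothesis is ``vacuous otherwise'' does not rescue the lemma --- a vacuous hypothesis yields no conclusion --- so integrability of $g$ should be added as an explicit assumption (it is automatic in the paper's application, where $g=|\Delta P_2(\cdot)^+|^2$ is continuous on $[0,T]$).
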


\noindent \textbf{Acknowledgement.} The authors are greatly indebted to
Prof. Xunyu Zhou for very helpful discussions and comments. The partial work
was completed when the second author was visiting the Department of Applied
Mathematics, The Hong Kong Polytechnic University. Their hospitality is very
appreciate.

\end{document}